\newcommand{\Proof}{\begin{proof}}
\newcommand{\End}{\end{proof}}
\newtheorem{lemma}{Lemma}[section]
\newtheorem{theorem}{Theorem}[section]
\newtheorem{definition}{Definition}[section]
\newtheorem{proposition}{Proposition}[section]
\newtheorem{remark}{Remark}[section]
\newtheorem{corollary}{Corollary}[section]
\numberwithin{equation}{section}
\begin{document}

\title[representation formula of viscosity solutions]{A representation formula of viscosity solutions to weakly coupled systems of Hamilton-Jacobi equations with applications to regularizing effect}

\author[L. Jin]{Liang Jin}
\address[L. Jin]{Department of Mathematics, Nanjing University of Science and Technology, Nanjing 210094, China}
\email{jl@njust.edu.cn}

\author[L. Wang]{Lin Wang}
\address[L. Wang]{Yau Mathematical Sciences Center, Tsinghua University, Beijing 100084, China}
\email{lwang@math.tsinghua.edu.cn}

\author[J. Yan]{Jun Yan}
\address[J. Yan]{School of Mathematical Sciences, Shanghai 200433, China}
\email{yanjun@fudan.edu.cn}

\date{\today}

\begin{abstract}
  Based on a fixed point argument, we give a {\it dynamical representation} of the viscosity solution to Cauchy problem of certain weakly coupled systems of Hamilton-Jacobi equations with continuous initial datum. Using this formula, we obtain some regularity results related to the viscosity solution, including a partial extension of Lions' regularizing effect \cite{L} to the case of weakly coupled systems.
\end{abstract}

\keywords{Weakly coupled systems, Implicit variational principle, Viscosity solutions, Lipschitz regularity}
\subjclass[2010]{37J50, 35F21, 35D40}

\thanks{The first author is supported by the NSF of China (Grants 11571166) and Startup Foundation of Nanjing University of Science and Technology. The second author is supported by the NSF of China (Grant 11631006, 11790273). The third author is supported by the NSF of China (Grant 11790273, 11631006).}

\maketitle
\thispagestyle{empty}
\tableofcontents

\section{Introduction and main results}
In this paper, we shall consider the following Cauchy problem of the weakly coupled system of evolutionary Hamilton-Jacobi equations
\begin{equation}\label{HJs}
\left\{
  \begin{array}{ll}
    \partial_{t}u_{i}+H_{i}(x,\partial_{x}u_{i},\mathbf{u})=0, & \hbox{$(x,t)\in M\times[0,\infty)$;} \\
    \mathbf{u}(x,0)=\mathbf{\varphi}(x), & \hbox{$x\in M, 1\leq i\leq m$.}
  \end{array}
\right.
\end{equation}
where $\mathbf{u}$ (resp. $\varphi$) are continuous vector-valued functions from $M\times[0,\infty)$ (resp. $M$) to $\mathbb{R}^{m}$. Hereinafter, the notation $u_{i}$ always denotes the $i$-th component of the vector $\mathbf{u}\in\mathbb{R}^{m}$.

In recent years, motivated by optimal switching problems, there have been many studies on the viscosity solutions to the system \eqref{HJs}, including the corresponding extensions of the weak KAM and Aubry-Mather theories \cite{DS,DSZ,FGM,MSTY}, the large-time behavior of solutions \cite{MT1,MT2,N}, and homogenization problems \cite{CLL,MT3}. Most of the studies focus on the linearly coupled case, i.e. Hamiltonians $H_{i}$ linearly depend on $\mathbf{u}\in\mathbb{R}^{m}$ (the linear coefficients may depend on $x$ variable). In these cases, well posedness results in \cite{EL,IK} can be applied since the monotonicity assumption is satisfied. Generally speaking, the linearly coupled case could be seen as a direct generalization of classical Hamilton-Jacobi equation, i.e., $H=\bar{H}(x,p)$ or discounted Hamilton-Jacobi equation, i.e., $H=\lambda u+\bar{H}(x,p)$when $m=1$.

From this point of view, the nonlinear weakly coupled system \eqref{HJs} could be considered as a natural extension of contact Hamilton-Jacobi equations studied in a series of works \cite{SWY,WWY1,WWY2}. The contact Hamilton-Jacobi equations are functional dual of contact Hamiltonian systems,  which have various physical applications in different areas, for instance, geometric optics and wave propagations, see \cite{Ar,BCT1}. We note that our work begins with non-monotonicity assumptions (H1)-(H3) (see Section 2.2 below).

First of all, we establish an implicit variational principle (also referred as dynamical programing principle), from which we introduce the notions of the variational solution and the solution semigroup for (\ref{HJs}). Moreover, by showing the equivalence between variational solutions and viscosity solutions, we obtain the dynamical representation of the viscosity solution of (\ref{HJs}). More precisely, we have
\begin{theorem}\label{the1}
If $H_{i}, 1\leq i\leq m$ satisfies (H1)-(H3), the Cauchy problem of the weakly coupled system  (\ref{HJs}) has a unique viscosity solution $\mathbf{u}(x,t)\in C(M\times[0,\infty),\mathbb{R}^{m})$, which can be represented by
\begin{equation}\label{vas eq11}
u_{i}(x,t)=\inf_{\substack{\gamma_i(t)=x\\ \gamma_i\in C^{ac}([0,t],M)}}\bigg\{\varphi_{i}(\gamma_i(0))+\int^{t}_{0}L_{i}(\gamma_i(s),\dot{\gamma}_i(s),\mathbf{u}(\gamma_i(s),s))ds\bigg\},
\end{equation}
where $L_{i}:TM\times\mathbb{R}^{m}\rightarrow\mathbb{R}$ denotes the convex dual associated to $H_{i}$.
\end{theorem}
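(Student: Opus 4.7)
My plan is to decompose the proof into three stages: constructing a ``variational solution'' satisfying the right-hand side of (\ref{vas eq11}) via an implicit variational principle, identifying this variational solution with a viscosity solution of (\ref{HJs}), and establishing uniqueness in the viscosity class. The strategy follows the contact-Hamilton-Jacobi template of \cite{SWY,WWY1,WWY2}, extended from the scalar setting to vector-valued $\mathbf{u}$.

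For the first stage I fix a short horizon $\tau>0$ to be chosen later and work in the Banach space $X_\tau=C(M\times[0,\tau],\mathbb{R}^m)$ with the sup norm. Given $\mathbf{v}\in X_\tau$, define
$$
(\mathcal{T}\mathbf{v})_i(x,t)=\inf_{\substack{\gamma(t)=x\\ \gamma\in C^{ac}([0,t],M)}}\bigg\{\varphi_i(\gamma(0))+\int_0^t L_i(\gamma(s),\dot{\gamma}(s),\mathbf{v}(\gamma(s),s))\,ds\bigg\}.
$$
Tonelli-type existence of minimizers (from convexity and superlinearity in the momentum slot, inherited from (H1)--(H3)) together with Lipschitz dependence of $L_i$ on the $\mathbf{u}$-slot (derived from (H4)) should imply that $\mathcal{T}$ maps $X_\tau$ into itself and is a contraction for $\tau$ sufficiently small. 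Banach's fixed-point theorem then yields a unique $\mathbf{u}\in X_\tau$ satisfying (\ref{vas eq11}) on $[0,\tau]$. Since the admissible $\tau$ depends only on a Lipschitz constant uniform over bounded initial data, one extends to $[0,\infty)$ by restarting at $\mathbf{u}(\cdot,\tau)$; the a-priori $L^\infty$ bounds needed to rerun the construction follow from Gronwall estimates on the action along minimizing curves.

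For the second stage, once $\mathbf{u}$ is the fixed point, the integrand $L_i(\gamma(s),\dot{\gamma}(s),\mathbf{u}(\gamma(s),s))$ becomes a genuine time-dependent Lagrangian $\tilde L_i(s,x,v):=L_i(x,v,\mathbf{u}(x,s))$ with no implicit dependence, so (\ref{vas eq11}) is the usual Lax--Oleinik representation for the scalar equation $\partial_tu_i+H_i(x,\partial_xu_i,\mathbf{u}(x,t))=0$, which produces a viscosity solution by standard dynamic-programming and convex-duality arguments. Thus each $u_i$ and therefore $\mathbf{u}$ solves (\ref{HJs}) in the viscosity sense.

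The main obstacle is uniqueness, because the classical comparison principle for weakly coupled Hamilton-Jacobi systems relies on a monotonicity of $H_i$ in $\mathbf{u}$ that is explicitly dropped here. I expect to circumvent this by proving that \emph{every} viscosity solution $\mathbf{w}$ of (\ref{HJs}) must itself satisfy the variational identity (\ref{vas eq11}). Integration of the subsolution inequality along an arbitrary $\gamma\in C^{ac}([0,t],M)$ ending at $(x,t)$, combined with the Fenchel--Young inequality $p\cdot v\leq L_i(x,v,\mathbf{w})+H_i(x,p,\mathbf{w})$, should give the $\leq$ direction; extracting a minimizing characteristic from the supersolution inequality and integrating along it should give the matching $\geq$ direction. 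With $\mathbf{w}$ thereby identified as a variational solution, the iterated short-time contraction from the first stage forces $\mathbf{w}=\mathbf{u}$, completing the proof.
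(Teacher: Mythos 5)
Your three-stage outline (fixed point $\Rightarrow$ variational solution $\Rightarrow$ viscosity solution $\Rightarrow$ uniqueness by showing every viscosity solution is variational) matches the paper's architecture, and the fixed-point construction in Stage 1 is essentially sound --- although the paper avoids the short-time restriction and the restarting argument altogether by noting that the $n$-th iterate of the operator has Lipschitz constant $(T\Theta)^n/n!$ on $C(M\times[0,T],\mathbb{R}^m)$ for \emph{any} $T$, so some power is a contraction on the whole time interval at once.

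The genuine gap is in Stages 2 and 3, and it is exactly the issue the paper's Lemma~\ref{Hje} and its approximation scheme are designed to handle. Once $\mathbf{u}$ is the fixed point, the ``frozen'' Hamiltonian $G(x,p,t)=H_i(x,p,\mathbf{u}(x,t))$ has coefficients that are only \emph{continuous} in $(x,t)$, because $\mathbf{u}$ is merely a fixed point in $C(M\times[0,T],\mathbb{R}^m)$ and has no extra regularity at this stage. The ``standard dynamic-programming and convex-duality arguments'' you invoke in Stage 2 (Tonelli minimizers being $C^2$ extremals, the Lax--Oleinik value function solving the equation in the viscosity sense, etc.) are proved for Hamiltonians at least $C^2$, or at least Lipschitz, in $(x,t)$; they do not apply verbatim here. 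Likewise, in Stage 3 ``extracting a minimizing characteristic from the supersolution inequality'' presupposes a well-defined characteristic flow, which requires differentiability of $G$ in $(x,t)$ and hence of $\mathbf{u}$; you also implicitly need Lipschitz regularity of the candidate $\mathbf{w}$ to integrate the subsolution inequality along a curve, and this is not available a priori. The paper circumvents both problems simultaneously by replacing $\mathbf{u}$ with a $C^2$ approximation $\mathbf{u}_\epsilon$, applying the classical theory (Tonelli, Lax--Oleinik, local Lipschitz regularity of the value function) to $G_\epsilon(x,p,t)=H_i(x,p,\mathbf{u}_\epsilon(x,t))$, and then passing to the limit: stability of viscosity solutions gives Stage 2, and a sandwich via the comparison principle applied to $u^\epsilon_\pm$ (solutions of $\partial_t u+G_\epsilon\pm c_\epsilon=0$, which \emph{are} Lipschitz by Lemma~\ref{Hje}) gives Stage 3. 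Without this regularization step --- or a substitute argument that explicitly copes with continuous-in-time coefficients --- your Stages 2 and 3 do not close.
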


\begin{remark}
When we finish writing this paper, we are told that in a previous work \cite{LC}, the authors obtained a similar result as Theorem \ref{the1} in the case that the Hamiltonians $H_{i}$ is independent of $x$.
\end{remark}

The regularity of viscosity solutions plays a fundamental role in applying the dynamical approach to the study of both classical and contact Hamilton-Jacobi equations, see \cite{Fa,SWY} for instance. The same situation occurs in the study of systems (\ref{HJs}).

Note that when $m=1$, a typical phenomenon found by P.L.Lions and other PDE specialists is that assume the initial data $\mathbf{\varphi}(x)$ is only {\it continuous }, then the solution $u(x,t)$ to \eqref{HJs} is locally Lipschitz on $M\times(0,T)$. We believe that similar phenomenon also holds true for weakly coupled systems \eqref{HJs}.

In order to avoid technical difficulties, we restrict ourselves to a typical model.  More precisely, we work under (H1)-(H4) and the additional assumption:
\begin{itemize}[\bf (H*)]
  \item For $1\leq i\leq m$, $H_{i}$ can be divided into two parts, i.e.
        \begin{equation}
        H_{i}(x,p,\mathbf{u})=h_{i}(x,p)+P_{i}(x,\mathbf{u}),
        \end{equation}

  and there exists $r,A\in(1,\infty)$ such that for $1\leq i\leq m$,
        \begin{equation}
        \begin{split}
        A^{-1}|p|^{r}-A\leq h_{i}(x,p)\leq A|p|^{r},\\
        |\partial_{x}h_{i}(x,p)|\leq A(|p|^{r}+1),\\
        \end{split}
        \end{equation}
\end{itemize}

\begin{remark}
One notice that the Hamiltonians satisfying (H*) can be viewed as a generalization of classical mechanical system, where $h_{i}(x,p)$ denotes the kinetic energy, usually represented by a Riemannian metric, and $P_{i}$ denotes the potential.
\end{remark}

Let $\Theta$ be the maximal Lipschitz constant of $H_i$ with respect to $\mathbf{u}$ for $1\leq i\leq m$, we obtain
\begin{theorem}\label{Thm Lip11}
Let $H_{i},1\leq i\leq m$ be Hamiltonians satisfying (H1)-(H3) and (H*), $\mathbf{u}$ the unique viscosity solution associated to the Cauchy problem \eqref{HJs}. Then $\mathbf{u}$ is locally Lipschitz continuous with respect to $t$ and $x$ on $M\times(0,\infty)$. Moreover, there is a continuous function $\kappa_{\varphi}:(0,\infty)\rightarrow(0,\infty)$ such that for any $x,y\in M$ and $t\in(0,\infty)$,
$$
\|\mathbf{u}(x,t)-\mathbf{u}(y,t)\|\leq\kappa_{\varphi}(t)\cdot|x-y|.
$$
Moreover, there exist constants $t_{\Theta},\kappa_{0}>0$ such that for any $t\in (0,t_{\Theta}], \kappa_{\varphi}(t)\leq\kappa_{0}\cdot t^{-\frac{1}{r}}$.
\end{theorem}

\begin{remark}
To handle the case of $m>1$, a new priori estimate has to be established. Unfortunately, we could not use this type of estimate to treat all systems satisfying (H1)-(H3). Thus Theorem \ref{Thm Lip11} partially generalizes the regularizing effect result for the case of the single equation, see \cite{Fa},\cite{L}.
\end{remark}

Based on Theorem \ref{Thm Lip11}, we obtain some further properties, including the semiconcavity of the viscosity solutions and the regularity of the action minimizing curves.
\begin{corollary}\label{cur lip11}
Let $\mathbf{u}$ be the unique viscosity solution associated to the Cauchy problem \eqref{HJs} and for each $1\leq i\leq m$, $\xi_{i}:[0,t]\rightarrow M$ be an absolutely continuous curve with $\xi_{i}(t)=x$ such that
$$
u_{i}(x,t)=\varphi_{i}(\xi_{i}(0))+\int^{t}_{0}L_{i}(\xi_{i}(s),\dot{\xi}_{i}(s),\mathbf{u}(\xi_{i}(s),s))ds,
$$
then there hold
 \begin{enumerate}[(i)]
   \item $u_{i}$ is locally semiconcave on $M\times(0,\infty)$.

   \item $\xi_{i}$ is locally Lipschitz on $(0,t]$.

   \item for $s\in(0,t)$, if $u_{i}$ is differentiable at $(\xi(s),s)$, then $\xi_{i}$ is differentiable at $s$. Denote by $V_i$ the derivative of $\xi_{i}$ at $s$ and let $P_i=\partial_{x}u_{i}(\xi_{i}(s),s)$, we have
         \begin{equation}\label{dual11}
         \langle P_i,V_i \rangle=L_{i}(\xi_{i}(s),V_i,\mathbf{u}(\xi_{i}(s),s))+H_{i}(\xi_{i}(s),P_i,\mathbf{u}(\xi_{i}(s),s)).
         \end{equation} Or equivalently,
         \[V_i=\frac{\partial H_i}{\partial p}(\xi_{i}(s),P_i,\mathbf{u}(\xi_{i}(s),s)).\]
         Similar conclusion holds for $s=t$ with the only difference that $\xi_{i}$ can only have derivative from left side.
\end{enumerate}
\end{corollary}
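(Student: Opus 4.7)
My overall strategy is to use Theorem~\ref{Thm Lip11} to fix the coupling: replacing the unknown $\mathbf{u}(x,s)$ by the known locally Lipschitz function delivered by that theorem, the variational representation~\eqref{vas eq11} becomes the minimization of a classical Tonelli action
\[
A_{i}(\gamma)=\varphi_{i}(\gamma(0))+\int_{0}^{t}\tilde L_{i}(\gamma(s),\dot\gamma(s),s)\,ds,\qquad \tilde L_{i}(x,v,s):=L_{i}(x,v,\mathbf{u}(x,s)),
\]
with $\tilde L_{i}$ locally Lipschitz in $(x,s)$ while retaining the smoothness, strict convexity and superlinearity of $L_{i}$ in $v$. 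The three parts of the corollary then become adaptations of classical results from the calculus of variations for this auxiliary Lagrangian.

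For (i), I would run a Fleming-type perturbation argument. Fix $(x_{0},t_{0})\in M\times(0,\infty)$, a small $h\in T_{x_{0}}M$, and a minimizer $\xi$ of $A_{i}$ with $\xi(t_{0})=x_{0}$. Using $\gamma_{\pm}(s):=\exp_{\xi(s)}\!\bigl(\tfrac{s}{t_{0}}(\pm h)\bigr)$ as admissible competitors for $u_{i}(x_{0}\pm h,t_{0})$ and expanding $\tilde L_{i}$ to second order in $(x,v)$, the odd part in $h$ cancels and one obtains $u_{i}(x_{0}+h,t_{0})+u_{i}(x_{0}-h,t_{0})-2u_{i}(x_{0},t_{0})\leq C|h|^{2}$. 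Incorporating a time reparametrization into the perturbation promotes this to joint semiconcavity in $(x,t)$, as in \cite[\S4.6]{Fa}.

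For (ii), $\xi_{i}$ is a Tonelli minimizer of $A_{i}$, with $\tilde L_{i}$ Lipschitz in $(x,s)$ and $C^{2}$, strictly convex and superlinear in $v$. Classical Tonelli regularity, via the integrated DuBois--Reymond identity for the energy $E(s)=\langle\dot\xi_{i},\partial_{v}L_{i}\rangle-L_{i}$ and the superlinearity of $L_{i}$, yields a uniform bound on $|\dot\xi_{i}|$ on every $[a,t]\subset(0,t]$, hence local Lipschitz continuity on $(0,t]$. The restriction $a>0$ reflects the merely continuous initial data $\varphi$ and the lack of time regularity of $\mathbf{u}$ at $s=0$. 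For (iii), at a differentiability point $(x_{0},t_{0})$ of $u_{i}$, the dynamic programming identity $u_{i}(x_{0},t_{0})=u_{i}(\xi_{i}(s),s)+\int_{s}^{t_{0}}L_{i}(\xi_{i},\dot\xi_{i},\mathbf{u})\,d\sigma$, divided by $t_{0}-s$ and passed to the limit $s\uparrow t_{0}$ via the Lipschitz bound from (ii), produces the left derivative $V_{i}$ together with $\langle P_{i},V_{i}\rangle+\partial_{t}u_{i}(x_{0},t_{0})=L_{i}(x_{0},V_{i},\mathbf{u}(x_{0},t_{0}))$. Substituting the pointwise Hamilton--Jacobi equation $\partial_{t}u_{i}+H_{i}(x_{0},P_{i},\mathbf{u})=0$, which is valid at differentiability points of a viscosity solution, yields~\eqref{dual11}; strict convexity then gives the equivalent form $V_{i}=\partial_{p}H_{i}(x_{0},P_{i},\mathbf{u}(x_{0},t_{0}))$.

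The main obstacle lies in the bootstrap in (ii). Since $\mathbf{u}$ is only Lipschitz (and not $C^{1}$) in $s$, the Euler--Lagrange equation for $\tilde L_{i}$ is available only in integrated form, so the pointwise $L^{\infty}$ bound on $\dot\xi_{i}$ has to be extracted from the DuBois--Reymond identity combined with the superlinearity of $L_{i}$; routine pointwise ODE estimates are unavailable, and the coupling potential $P_{i}(x,\mathbf{u})$ contributes a Lipschitz-in-time forcing that must be carefully absorbed into the energy estimate uniformly on each subinterval $[a,t]$.
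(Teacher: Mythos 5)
Parts (ii) and (iii) of your proposal are substantially sound, though your routes differ from the paper's. For (iii) you follow essentially the paper's argument (dynamic programming identity, Jensen's inequality for the convex $L_i$, the pointwise HJ equation at a point of differentiability, and Fenchel--Young equality with strict convexity); the one point to tighten is that a priori the difference quotient $\bigl(x_0-\xi_i(s)\bigr)/(t_0-s)$ only has subsequential limit points, and it is precisely the Fenchel/Legendre uniqueness that shows all such limits coincide, whence the left derivative exists. For (ii) the paper's proof is considerably shorter than your DuBois--Reymond route and avoids the weak-derivative issues you correctly anticipate: it combines the locally Lipschitz bound $|u_i(\xi(t_2),t_2)-u_i(\xi(t_1),t_1)|\leq\kappa\bigl(|\xi(t_2)-\xi(t_1)|+|t_2-t_1|\bigr)$ supplied by Theorem~\ref{Thm Lip11}, the calibration identity $u_i(\xi(t_2),t_2)-u_i(\xi(t_1),t_1)=\int_{t_1}^{t_2}L_i$, and the superlinear lower bound $L_i\geq K|\dot\xi|-D$ with $K>2\kappa$, giving $(K-\kappa)|\xi(t_2)-\xi(t_1)|\leq(\kappa+D)|t_2-t_1|$ in two lines, with no Euler--Lagrange or energy identity at all.

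The genuine gap is in part (i). You propose Fleming's competitor curves $\gamma_\pm(s)=\xi(s)\pm\frac{s}{t_0}h$ and assert that ``the odd part in $h$ cancels'' after a second-order expansion of $\tilde L_i(x,v,s)=L_i(x,v,\mathbf{u}(x,s))$. But Theorem~\ref{Thm Lip11} only makes $\mathbf{u}(\cdot,s)$ Lipschitz in $x$, so $\tilde L_i$ is merely Lipschitz, not $C^{1,1}$, in $x$. After summing the two competitors, the term coming from the coupling is essentially
\[
\int_0^{t_0}\partial_{\mathbf{u}}L_i\cdot\Bigl[\mathbf{u}\bigl(\xi(s)+\tfrac{s}{t_0}h,s\bigr)+\mathbf{u}\bigl(\xi(s)-\tfrac{s}{t_0}h,s\bigr)-2\mathbf{u}(\xi(s),s)\Bigr]\,ds,
\]
and for a merely Lipschitz $\mathbf{u}$ the bracket is only $O(|h|)$; a bound of order $O(|h|^2)$ is precisely the semiconcavity of $\mathbf{u}$ you are trying to prove, so the perturbation argument is circular. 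The paper avoids this by treating $u_i$ as a viscosity solution of the scalar equation $\partial_t u+H_i(x,\partial_x u,\mathbf{u}(x,t))=0$ with the coupling frozen to a known locally Lipschitz function, noting that $H_i$ is strictly convex in $p$ by (H2), and invoking the PDE-side semiconcavity result \cite[Theorem~5.3.8]{CS}; the hard analytical work is thus delegated to an established theorem rather than rebuilt via a naive Fleming perturbation.
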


The last statement of this corollary asserts that the minimizing curves passing through the differentiable points of $\mathbf{u}$ satisfy certain ODEs. Comparably, for the contact Hamilton-Jacobi equation corresponding to $m=1$ in (\ref{HJs}), we know that the action minimizing curve $\xi_i$ is smooth enough and it is exactly the $x$-component of the characteristics (contact Hamilton equations) if $H$ is of class $C^3$ \cite{WWY1}:
\begin{align*}
\left\{
        \begin{array}{l}
        \dot{x}=\frac{\partial H}{\partial p}(x,u,p),\\
        \dot{p}=-\frac{\partial H}{\partial x}(x,u,p)-\frac{\partial H}{\partial u}(x,u,p)p,\qquad (x,p,u)\in T^*M\times\mathbb{R},\\
        \dot{u}=\frac{\partial H}{\partial p}(x,u,p)\cdot p-H(x,u,p).
         \end{array}
         \right.
\end{align*}
Unfortunately, the notion of characteristics does not carry over directly to the case when $m\geq 2$. And it seems that the Lipschitz regularity of $\xi_i$ can not be improved.

This paper is outlined as follows. In Section 2, we introduce the notations and general settings as preliminaries. In Section 3, we use a version of Picard iteration to define the variational solution and the solution semigroup. In Section 4, by showing the equivalence between viscosity solutions and variational solutions, we will complete the proof of Theorem \ref{the1}. In Section 5 and Section 6, we will discuss the locally Lipschitz continuity of the viscosity solution and further benefits from this property. The proofs of Theorem \ref{Thm Lip11} and Corollary \ref{cur lip11} are completed in these final sections.

\section{Preliminaries}

\subsection{Notations}
In this part, we fix notations for later presentations. Once and for all, $n,m>1$ are two fixed positive integers and $|\cdot|$ denotes the Euclidean norm.

\vspace{0.5cm}

Let $M$ be $\mathbb{T}^{n}$, the $n$-dimensional flat torus with the standard product metric. For any two points $x,y$ on $M$, we also use $|x-y|$ to denote their distance induced by the flat metric on $M$. Let $\|\cdot\|$ denote the $L^{\infty}$-norm  for vectors in $\mathbb{R}^{m}$, i.e. for any $v\in\mathbb{R}^{m}$,
$$
\|v\|=\max_{1\leq i\leq m}|v_{i}|.
$$
Denote by $TM$ the tangent bundle of $M$ and by $(x,\dot{x})$ a point of $TM$, where $x\in M$ and $\dot{x}\in T_{x}M=\mathbb{R}^{n}$; denote by $T^{\ast}M$ the cotangent bundle of $M$ and by $(x,p)$ a point of $T^{\ast}M$, where $p\in\mathbb{R}^{n}$ a linear form on $T_{x}M$. The latter will be identified with the vector $p\in\mathbb{R}^{n}$ through $p(\dot{x})=\langle p,\dot{x}\rangle$. Here $\langle\cdot,\cdot\rangle$ is the Euclidean scalar product on $\mathbb{R}^{n}$. For every $x\in M$, the fibers $T_{x}M$ and $T^{\ast}_{x}M$ are also endowed with the Euclidean norm.

\vspace{0.5cm}

Let $C(M,\mathbb{R}^{m})$ denote the Banach space of continuous vector-valued functions $\mathbf{u}$ from $M$ to $\mathbb{R}^{m}$, endowed with the norm $\|\mathbf{u}\|_{\infty,M}:=\max_{x\in M}\|\mathbf{u}(x)\|$. Similarly, given $T>0$, $C(M\times[0,T],\mathbb{R}^{m})$ denotes the Banach space of continuous vector-valued functions $\mathbf{u}$ from $M\times[0,T]$ to $\mathbb{R}^{m}$, endowed with the norm $\|\mathbf{u}\|_{\infty,M\times[0,T]}=\max_{(x,t)\in M\times[0,T]}\|\mathbf{u}(x,t)\|$. Now given $\kappa>0$, we say $\mathbf{u}\in C(M,\mathbb{R}^{m})$ is $\kappa$-Lipschitz if for any $x,y\in M$
$$
\|\mathbf{u}(x)-\mathbf{u}(y)\|\leq\kappa|x-y|,
$$
and by the definition of $\|\cdot\|$, it is equivalent to that each component of $\mathbf{u}$ is $\kappa$-Lipschitz. Similar definition applies for functions in $C(M\times[0,\infty),\mathbb{R}^{m})$, where a function is called Lipschitz only if it is Lipschitz continuous in both variables. Let $\Omega$ be a compact domain, $j\in\mathbb{N}$ and $\mathbf{u}\in C^{j}(\Omega,\mathbb{R}^{m})$, we use
$$
\|\mathbf{u}\|_{C^{j},\Omega}=\max_{|\alpha|\leq j}\{\|\partial^{\alpha}\mathbf{u}\|_{\infty,\Omega}\}
$$
to denote the $C^{j}$-norm of $\mathbf{u}$.

\vspace{0.5cm}

Let $O(h)$ ($o(h)$) denote some real-valued function $f$ defined on a neighborhood of $0$ such that $|\frac{f(h)}{h}|$ is bounded (goes to $0$) as $h$ goes to $0$ respectively.

\subsection{Assumptions}
Now we introduce assumptions on the Hamiltonians arose in system \eqref{HJs}. For $1\leq i\leq m$, let $H_{i}\in C^{2}(T^{\ast}M\times\mathbb{R}^{m},\mathbb{R})$ be
\begin{enumerate}[\bf (H1)]
  \item strictly convex with respect to $p$,

  \item superlinear with respect to $p$, i.e. for any $(x,\mathbf{u})$,
        $$
        \frac{H_{i}(x,p,\mathbf{u})}{|p|}\rightarrow\infty\hspace{0.2cm}\text{as}\hspace{0.2cm}|p|\rightarrow\infty,
        $$

  \item uniformly Lipschitz with respect to $\mathbf{u}$, i.e. there exists $\Theta>0$ such that
        $$
        |H_{i}(x,p,\mathbf{u})-H_{i}(x,p,\mathbf{v})|\leq\Theta\|\mathbf{u}-\mathbf{v}\|\quad\text{for all }x,p,\mathbf{u},\mathbf{v}.
        $$
\end{enumerate}

For $1\leq i\leq m$, let $L_{i}:TM\times\mathbb{R}^{m}\rightarrow\mathbb{R}$ be the convex dual associated to $H_{i}$, that is,
\begin{equation*}
L_{i}(x,\dot{x},\mathbf{u}):=\sup_{p\in T^{\ast}_{x}M}\{\langle p,\dot{x}\rangle-H(x,p,\mathbf{u})\}.
\end{equation*}

Then (H1)-(H3) are easily translated to the assumptions on $L_{i}\in C^{2}(TM\times\mathbb{R}^{m},\mathbb{R})$:
\begin{enumerate}[\bf (L1)]
  \item strictly convex with respect to $v$,

  \item superlinear with respect to $\dot{x}$, i.e. fix any $(x,\mathbf{u})$,
        $$
        \frac{L_{i}(x,\dot{x},\mathbf{u})}{|\dot{x}|}\rightarrow\infty\hspace{0.2cm}\text{as}\hspace{0.2cm}|\dot{x}|\rightarrow\infty,
        $$

  \item uniformly Lipschitz with respect to $\mathbf{u}$, i.e. there exists $\Theta>0$ such that
        $$
        |L_{i}(x,\dot{x},\mathbf{u})-L_{i}(x,\dot{x},\mathbf{v})|\leq\Theta\|\mathbf{u}-\mathbf{v}\|\quad\text{for all }x,\dot{x},\mathbf{u},\mathbf{v}.
        $$
\end{enumerate}

\begin{remark}
Technically, assumption (H2) also reads as: for any $C>0$ and compact set $\mathcal{K}\subset\mathbb{R}^{m}$, there exist positive constants $R(C,\mathcal{K}), D(C,\mathcal{K})$ such that for $\mathbf{u}\in\mathcal{K}$ and $p\in T^{\ast}_{x}M$ with $|p|\geq R$, $H(x,p,\mathbf{u})\geq C|p|-D$. By the dual property of $L$ and $H$, the above proposition is also valid for $L$ with $p$ replaced by $\dot{x}$.
\end{remark}

We add a brief review on generality of (H1)-(H3) or (L1)-(L3) here.

1. Assumptions (H1)-(H2) constitute the famous Tonelli conditions, which are very general conditions for proving the existence of the action minimizers in the theory of calculus of variation. To develop a global variational methods for positive definite Hamiltonian systems with arbitrary degree of freedom, J. Mather proposed (H1)-(H2) as basic assumptions in his celebrated papers \cite{M1},\cite{M2}.

Mather initially deal with the case $H=H(x,p,t)$ also depending periodically on time variable $t$, but contains no $\mathbf{u}$-variable. He further assume that the associated Hamilton flow is complete since there are examples whose action minimizers are not solutions to the corresponding Euler-Lagrange equation. In the case that $H=H(x,p)$ does not depend on $t$, (H1)-(H2) implies the completeness of the Hamilton flow.

2. If $m=1$, such a Hamiltonian $H=H(x,p,u)$ is called contact Hamiltonian. Recent study shows that (H1)-(H3) is also a suitable setting for generalizing the global variational methods including Aubry-Mather theory and weak KAM theory to contact Hamiltonian systems. For such a topic, we refer to the series of works by K. Wang, the second and third author, see \cite{SWY}, \cite{WWY1},\cite{WWY2}. The condition (H3) is crucial in establishing the implicit variational principle in the contact case.

\subsection{Definition of viscosity solution}
Since a solution to the weakly coupled system \eqref{HJs} is a vector-valued continuous function, the notion of viscosity solution should be reformulated. The following definition, given in \cite{CLL}, is a suitable candidate and can be viewed as a componentwise generalization of the classical definition.

\begin{definition}\label{vis}
Let $\mathbf{u}:M\times[0,\infty)\rightarrow\mathbb{R}^{m}$ be a continuous function,
\begin{enumerate}[(i)]
  \item it is called a viscosity subsolution of \eqref{HJs} if for each $1\leq i\leq m$,
        \begin{itemize}
          \item $u_{i}(\cdot,0)\leq\varphi_{i}$ on $M$,

          \item whenever $\phi$ is a real-valued $C^{1}$ function on a neighborhood of $(x,t),t>0$ such that $u_{i}-\phi$ attains a local maximum at $(x,t)$,
               \begin{equation*}
               \partial_{t}\phi(x,t)+H_{i}(x,\partial_{x}\phi(x,t),\mathbf{u}(x,t))\leq0.
               \end{equation*}
        \end{itemize}

  \item it is called a viscosity supersolution of \eqref{HJs} if for each $1\leq i\leq m$,
        \begin{itemize}
          \item $u_{i}(\cdot,0)\geq\varphi_{i}$ on $M$,

          \item whenever $\phi$ is a real-valued $C^{1}$ function on a neighborhood of $(x,t),t>0$ such that $u_{i}-\phi$ attains a local minimum at $(x,t)$,
               \begin{equation*}
               \partial_{t}\phi(x,t)+H_{i}(x,\partial_{x}\phi(x,t),\mathbf{u}(x,t))\geq0.
               \end{equation*}
        \end{itemize}

  \item it is called a viscosity solution if it is both a viscosity sub and supersolution of \eqref{HJs}.
\end{enumerate}
\end{definition}

From now on, solutions to the system \eqref{HJs} are always understood in the above sense.

\section{Variational solution and solution semigroup}
In this section, we construct a variational principle (dynamical programing principle) corresponding to system \eqref{HJs}, which is a multi-dimensional analogy of the one constructed for contact Hamilton-Jacobi equations, see \cite{SWY}-\cite{WWY2}.

Based on this variational principle, we define the notion of variational solution and solution semigroup associated to system \eqref{HJs}. Some necessary properties of the variational solution are proved through this procedure.

\vspace{0.5cm}
\begin{definition}
Fix $\varphi\in C(M,\mathbb{R}^{m})$, for a given $T>0$, we define an operator $\mathbb{A}_{\varphi}$ from $C(M\times[0,T],\mathbb{R}^{m})$ to itself. Let $\,\,\,\mathbf{u}\in C(\mathbb{T}^{n}\times[0,T],\mathbb{R}^{m})$,
\begin{equation}\label{eq:2}
\mathbb{A}_{\varphi}[\mathbf{u}]_{i}(x,t)=\inf_{\substack{\gamma_{i}(t)=x\\ \gamma_{i}\in C^{ac}([0,t],M)}}\bigg\{\varphi_{i}(\gamma_{i}(0))+\int^{t}_{0}L_{i}(\gamma_{i}(s),\dot{\gamma}_{i}(s),\mathbf{u}(\gamma_{i}(s),s))ds\bigg\}.
\end{equation}
\end{definition}

\begin{remark}
We note that, by Equation \eqref{eq:2}, $\mathbb{A}_{\varphi}$ does not depend on $T$; by Tonelli theorem (see for instance \cite{BGH}), for any $1\leq i\leq m$, the above infimum can be achieved.
\end{remark}

\begin{proposition}\label{fp}
For any $T>0$, $\mathbb{A}_{\varphi}$ admits a unique fixed point $\mathbf{u}_{\varphi,T}$ in $C(M\times[0,T],\mathbb{R}^{m})$.
\end{proposition}

\begin{proof}
Let $(x,t)\in M\times[0,T]$. For any $\mathbf{u}\in C(M\times[0,T],\mathbb{R}^{m})$ and $1\leq i\leq m$, let $\xi_{i,1}:[0,t]\rightarrow M$ be an absolutely continuous curve with $\xi_{i,1}(t)=x$ such that
\begin{equation*}
\mathbb{A}_{\varphi}[\mathbf{u}]_{i}(x,t) = \varphi_{i}(\xi_{i,1}(0))+ \int_{0}^{t} L_{i}(\xi_{i,1}(s),\dot{\xi}_{i,1}(s), \mathbf{u}(\xi_{i,1}(s),s))ds.
\end{equation*}

For any $\mathbf{v}\in C(M\times[0,T],\mathbb{R}^{m})$, from (L3) we have
\begin{align*}
&\mathbb{A}_{\varphi}[\mathbf{u}]_{i}(x,t)-\mathbb{A}_{\varphi}[\mathbf{v}]_{i}(x,t)\\ \leq&\int_{0}^{t}|L_{i}(\xi_{i,1}(s),\dot{\xi}_{i,1}(s), \mathbf{v}(\xi_{i,1}(s),s))- L_{i}(\xi_{i,1}(s),\dot{\xi}_{i,1}(s),\mathbf{u}(\xi_{i,1}(s),s))|ds\\
\leq&t\Theta\|\mathbf{u}-\mathbf{v}\|_{\infty,M\times[0,T]}.
\end{align*}

By exchanging the position of $\mathbf{u}$ and $\mathbf{v}$, we obtain

\begin{equation*}
|\mathbb{A}_{\varphi}[\mathbf{u}]_{i}(x,t)-\mathbb{A}_{\varphi}[\mathbf{v}]_{i}(x,t)|\leq t\Theta\|\mathbf{u}-\mathbf{v}\|_{\infty,M\times[0,T]},
\end{equation*}
which is equivalent to
\begin{equation}\label{p-1}
\|\mathbb{A}_{\varphi}[\mathbf{u}](x,t)-\mathbb{A}_{\varphi}[\mathbf{v}](x,t)\|\leq t\Theta\|\mathbf{u}-\mathbf{v}\|_{\infty,M\times[0,T]}.
\end{equation}

Let $\xi_{i,2}:[0,t]\rightarrow M$ be an absolutely continuous curve with $\xi_{i,2}(t)=x$ such that

\[
\mathbb{A}_{\varphi}^{2}[\mathbf{v}](x,t) = \varphi(\xi_{i,2}(0))+\int_{0}^{t}L_{i}(\xi_{i,2}(s), \dot{\xi}_{i,2}(s),\mathbb{A}_{\varphi} [\mathbf{v}](\xi_{i,2}(s),s)) ds.
\]
It follows from (\ref{p-1}) that  for $s\in [0,t]$, we have

\begin{equation*}
\|\mathbb{A}_{\varphi}[\mathbf{u}](\xi_{i,2}(s),s)-\mathbb{A}_{\varphi}[\mathbf{v}](\xi_{i,2}(s),s)\|\leq s\Theta\|\mathbf{u}-\mathbf{v}\|_{\infty,M\times[0,T]}.
\end{equation*}

Thus we have the following estimates

\begin{align*}
&\mathbb{A}_{\varphi}^{2}[\mathbf{v}]_{i}(x,t)-\mathbb{A}_{\varphi}^{2}[\mathbf{u}]_{i}(x,t)\\
\leq&\int_{0}^{t}\Theta\|\mathbb{A}_{\varphi}[\mathbf{u}](\xi_{i,2}(s),s)-
\mathbb{A}_{\varphi}[\mathbf{v}](\xi_{i,2}(s),s)\|ds\\
\leq&\Theta^{2}\|\mathbf{u}-\mathbf{v}\|_{\infty,M\times[0,T]}\int_{0}^{t}sds\\
\leq&\frac{(t\Theta)^2}{2}\|\mathbf{u}-\mathbf{v}\|_{\infty,M\times[0,T]}.
\end{align*}

By exchanging $\mathbf{u}$ and $\mathbf{v}$, we obtain
\begin{equation*}
\|\mathbb{A}_{\varphi}^{2}[\mathbf{u}](x,t)-\mathbb{A}_{\varphi}^{2}[\mathbf{v}](x,t)\|\leq \frac{(t\Theta)^{2}}{2}\|\mathbf{u}-\mathbf{v}\|_{\infty,M\times[0,T]}.
\end{equation*}

Continuing the above procedure, we obtain

\[
\|\mathbb{A}_{\varphi}^{n}[\mathbf{u}](x,t)-\mathbb{A}_{\varphi}^{n}[\mathbf{v}](x,t)\|
\leq\frac{(t\Theta)^{n}}{n!}\|\mathbf{u}-\mathbf{v}\|_{\infty,M\times[0,T]},
\]
which implies

\[
\|\mathbb{A}_{\varphi}^{n}[\mathbf{u}]-\mathbb{A}_{\varphi}^{n}[\mathbf{v}]\|_{\infty,M\times[0,T]}
\leq\frac{(T\Theta)^{n}}{n!}\|\mathbf{u}-\mathbf{v}\|_{\infty,M\times[0,T]}.
\]
Therefore, there exists  $\mathbf{N}\in \mathbb{N}$ large enough such that $\mathbb{A}_{\varphi}^{\mathbf{N}}$ is a
contraction mapping. Since $C(M\times[0,T],\mathbb{R}^{m})$ is complete, by Banach fixed point theorem, there exists a unique $\mathbf{u}_{\varphi,T}\in C(M\times[0,T], \mathbb{R}^{m})$ such that

\begin{equation*}
\mathbb{A}_{\varphi}^{\mathbf{N}}[\mathbf{u}_{\varphi,T}]=\mathbf{u}_{\varphi,T}.
\end{equation*}

Since
$$
\mathbb{A}_{\varphi}[\mathbf{u}_{\varphi,T}]=\mathbb{A}_{\varphi}\circ \mathbb{A}_{\varphi}^{\mathbf{N}}[\mathbf{u}_{\varphi,T}] = \mathbb{A}_{\varphi}^{\mathbf{N}}\circ\mathbb{A}_{\varphi}[\mathbf{u}_{\varphi,T}],
$$
$\mathbb{A}_{\varphi}[\mathbf{u}_{\varphi,T}]$ is also a fixed point of $\mathbb{A}_{\varphi}^{\mathbf{N}}$. By the uniqueness of fixed point of $\mathbb{A}_{\varphi}^{\mathbf{N}}$, we have

\[
\mathbb{A}_{\varphi}[\mathbf{u}_{\varphi,T}]=\mathbf{u}_{\varphi,T}.
\]
This completes the proof of Lemma \ref{fp}.
\end{proof}

\vspace{0.5cm}
For any $0<T<T^{\prime}$, denote by $\mathbf{u}_{\varphi,T^{\prime}}$ the fixed point of $\mathbb{A}_{\varphi}$ in $C(M\times[0,T^{\prime}])$. From the definition of $\mathbb{A}_{\varphi}$, $\mathbf{u}_{\varphi,T^{\prime}}|_{M\times[0,T]}$ is also a fixed point of $\mathbb{A}_{\varphi}$, thus by uniqueness of fixed point of $\mathbb{A}_{\varphi}$, $\mathbf{u}_{\varphi,T^{\prime}}|_{M\times[0,T]}=\mathbf{u}_{\varphi,T}$. Thus we define
\begin{equation}
\mathbf{u}(x,t)=\mathbf{u}_{\varphi,T}(x,t)\hspace{0.3cm}\text{ for }t\leq T.
\end{equation}
This $\mathbf{u}(x,t)$ coincides with the notion of variational solution associated to \eqref{HJs} defined as

\begin{definition}\label{vas}
Let $\mathbf{u}:M\times[0,\infty)\rightarrow\mathbb{R}$ be a continuous function, $\mathbf{u}$ is called a variational solution to the system \eqref{HJs} if for any $(x,t)\in M\times[0,\infty)$ and any $1\leq i\leq m$,
\begin{equation}\label{vas eq}
u_{i}(x,t)=\inf_{\substack{\gamma_{i}(t)=x\\ \gamma_{i}\in C^{ac}([0,t],M)}}\bigg\{\varphi_{i}(\gamma_{i}(0))+\int^{t}_{0}L_{i}(\gamma_{i}(s),\dot{\gamma}_{i}(s),\mathbf{u}(\gamma_{i}(s),s))ds\bigg\}.
\end{equation}
\end{definition}

\vspace{0.5cm}

The above definition disintegrate into the following dominated-calibrated properties.
\begin{proposition}\label{ca}
Let $\mathbf{u}$ be a variational solution associated to system \eqref{HJs}, then
\begin{enumerate}[\rm (i)]
  \item for any $1\leq i\leq m, 0\leq t_{1}<t_{2}$ and any absolutely continuous curve $\gamma_{i}:[t_{1},t_{2}]\rightarrow M$,
        \begin{equation}\label{cali1}
        u_{i}(\gamma_{i}(t_{2}),t_{2})-u_{i}(\gamma_{i}(t_{1}),t_{1})\leq\int^{t_{2}}_{t_{1}}L_{i}(\gamma_{i}(s),\dot{\gamma}_{i}(s),\mathbf{u}(\gamma_{i}(s),s))ds.
        \end{equation}

  \item for each $1\leq i\leq m$ and any $0\leq t_{1}<t_{2}, x\in M$, there exists an absolutely continuous curve $\xi_{i}:[t_{1},t_{2}]\rightarrow M$ with $\xi_{i}(t_{2})=x$ such that
        \begin{equation}
        u_{i}(\xi_{i}(t_{2}),t_{2})-u_{i}(\xi_{i}(t_{1}),t_{1})=\int^{t_{2}}_{t_{1}}L_{i}(\xi_{i}(s),\dot{\xi}_{i}(s),\mathbf{u}(\xi_{i}(s),s))ds.
        \end{equation}
\end{enumerate}
\end{proposition}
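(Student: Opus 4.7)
The plan is the standard calibration argument, adapted to this weakly-coupled, self-referential setting: both parts follow from concatenating curves inside the variational formula \eqref{vas eq}, with Tonelli existence of minimizers providing the curve in (ii).

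For part (i), I would fix $i$ and an absolutely continuous $\gamma:[t_1,t_2]\to M$. By Tonelli's theorem applied to the variational formula for $u_i(\gamma(t_1),t_1)$, there is a minimizing $\alpha:[0,t_1]\to M$ with $\alpha(t_1)=\gamma(t_1)$ realizing that infimum. Concatenating, set $\beta:[0,t_2]\to M$ by $\beta=\alpha$ on $[0,t_1]$ and $\beta=\gamma$ on $[t_1,t_2]$, so $\beta$ is absolutely continuous with $\beta(t_2)=\gamma(t_2)$. Using $\beta$ as a test curve in the variational formula for $u_i(\gamma(t_2),t_2)$, and splitting the integral at $t_1$, gives
\begin{equation*}
u_i(\gamma(t_2),t_2)\le \varphi_i(\alpha(0))+\int_0^{t_1}L_i(\alpha,\dot\alpha,\mathbf{u})\,ds+\int_{t_1}^{t_2}L_i(\gamma,\dot\gamma,\mathbf{u})\,ds = u_i(\gamma(t_1),t_1)+\int_{t_1}^{t_2}L_i(\gamma,\dot\gamma,\mathbf{u})\,ds,
\end{equation*}
which is exactly \eqref{cali1}. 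The crucial point allowing the first equality is precisely that $\mathbf{u}$ is a fixed point of $\mathbb{A}_{\varphi,T}$: the $L_i$ integrand depends only on $\mathbf{u}$ evaluated along the curve, not on the endpoint, so replacing $\beta$'s left-hand piece by the minimizer $\alpha$ does not alter the structure of the integrand.

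For part (ii), again by Tonelli's theorem there exists an absolutely continuous $\eta:[0,t_2]\to M$ with $\eta(t_2)=x$ realizing the infimum defining $u_i(x,t_2)$. I would take $\gamma_i := \eta|_{[t_1,t_2]}$. Applying (i) to $\gamma_i$ yields the $\le$ direction. For the reverse inequality, observe that $\eta|_{[0,t_1]}$ is a legitimate candidate in the variational formula for $u_i(\eta(t_1),t_1)=u_i(\gamma_i(t_1),t_1)$, so
\begin{equation*}
u_i(\gamma_i(t_1),t_1)\le \varphi_i(\eta(0))+\int_0^{t_1}L_i(\eta,\dot\eta,\mathbf{u})\,ds.
\end{equation*}
Subtracting this from the identity $u_i(x,t_2)=\varphi_i(\eta(0))+\int_0^{t_2}L_i(\eta,\dot\eta,\mathbf{u})\,ds$ and splitting the integral at $t_1$ gives
\begin{equation*}
u_i(x,t_2)-u_i(\gamma_i(t_1),t_1)\ge \int_{t_1}^{t_2}L_i(\gamma_i,\dot\gamma_i,\mathbf{u})\,ds,
\end{equation*}
and combined with (i) this forces equality.

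I do not expect any real obstacle; the argument is the standard dynamic-programming/calibration trick from weak KAM theory, and everything needed (existence of minimizers via Tonelli, self-consistency of $\mathbf{u}$) is in place from Lemma~\ref{fp} and the preceding discussion. The only point worth double-checking is that concatenation of two absolutely continuous curves remains absolutely continuous with the expected derivative a.e., so that the additive splitting of the $L_i$ integrals is literal rather than merely heuristic; this is routine.
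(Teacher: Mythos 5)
Your argument is correct and essentially reproduces the paper's proof. For part (i) the paper argues by contradiction (assume the inequality fails, concatenate with a minimizer on $[0,t_1]$, contradict the definition of the variational solution) while you give the direct version of the same concatenation argument; for part (ii) the two arguments are identical. The remark that the self-referential structure is what licenses the dynamic-programming step is accurate, and the observation that concatenation preserves absolute continuity is indeed the only minor point to verify.
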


\begin{proof}
(i) In fact, there exists an absolutely continuous curve $\eta_{i}:[t_{1},t_{2}]\rightarrow M$ such that
\begin{equation*}
u_{i}(\eta_{i}(t_{2}),t_{2})-u_{i}(\eta_{i}(t_{1}),t_{1})>\int^{t_{2}}_{t_{1}}L_{i}(\eta_{i}(s),\dot{\eta}_{i}(s),\mathbf{u}(\eta_{i}(s),s))ds,
\end{equation*}
let $\xi_{i}\in C^{ac}([0,t_{1}],M)$ be a curve with $\xi_{i}(t_{1})=\eta_{i}(t_{1})$ such that
\begin{equation*}
u_{i}(\xi_{i}(t_{1}),t_{1})=\varphi_{i}(\xi_{i}(0))+\int^{t_{1}}_{0}L_{i}(\xi_{i}(s),\dot{\xi}_{i}(s),\mathbf{u}(\xi_{i}(s),s))ds,
\end{equation*}
then $\gamma_{i}:=\eta_{i}\star\xi_{i}\in C^{ac}([0,t_{2}],M)$ and
\begin{align*}
&u_{i}(\gamma_{i}(t_{2}),t_{2})-\varphi_{i}(\gamma_{i}(0))=u_{i}(\eta_{i}(t_{2}),t_{2})-\varphi_{i}(\gamma_{i}(0))\\
=&[u_{i}(\eta_{i}(t_{2}),t_{2})-u_{i}(\eta_{i}(t_{1}),t_{1})]-[u_{i}(\xi_{i}(t_{1}),t_{1})-\varphi_{i}(\xi_{i}(0))]\\
>&\int^{t_{2}}_{t_{1}}L_{i}(\eta_{i}(s),\dot{\eta}_{i}(s),\mathbf{u}(\eta_{i}(s),s))ds+\int^{t_{1}}_{0}L_{i}(\xi_{i}(s),\dot{\xi}_{i}(s),\mathbf{u}(\xi_{i}(s),s))ds\\
=&\int^{t_{2}}_{0}L_{i}(\gamma_{i}(s),\dot{\gamma}_{i}(s),\mathbf{u}(\gamma_{i}(s),s))ds,
\end{align*}
which contradicts to Definition \ref{vas}.

\vspace{0.5cm}

(ii) Let $\bar{\xi}_{i}\in C^{ac}([0,t_{2}],M)$ be a curve with $\bar{\xi}_{i}(t_{2})=x$ such that
\begin{equation}\label{cali}
u_{i}(\bar{\xi}_{i}(t_{2}),t_{2})=\varphi_{i}(\bar{\xi}_{i}(0))+\int^{t_{2}}_{0}L_{i}(\bar{\xi}_{i}(s),\dot{\bar{\xi}}_{i}(s),\mathbf{u}(\bar{\xi}_{i}(s),s))ds.
\end{equation}

Now by (i), for any $0\leq t_{1}<t_{2}$,
\begin{equation*}
\begin{split}
u_{i}(\bar{\xi}_{i}(t_{2}),t_{2})-u_{i}(\bar{\xi}_{i}(t_{1}),t_{1})\leq\int^{t_{2}}_{t_{1}}L_{i}(\bar{\xi}_{i}(s),\dot{\bar{\xi}}_{i}(s),\mathbf{u}(\bar{\xi}_{i}(s),s))ds,\\
u_{i}(\bar{\xi}_{i}(t_{1}),t_{1})-\varphi_{i}(\bar{\xi}_{i}(0),0)\leq\int^{t_{1}}_{0}L_{i}(\bar{\xi}_{i}(s),\dot{\bar{\xi}}_{i}(s),\mathbf{u}(\bar{\xi}_{i}(s),s))ds.
\end{split}
\end{equation*}
We add the above two inequalities and use \eqref{cali} to find that the inequalities are actually equalities. So we define $\xi_{i}=\bar{\xi}_{i}|_{[t_{1},t_{2}]}$ to complete the proof.
\end{proof}

Now let us define a family of operators $\{T^{-}_{t}\}_{t\in\mathbb{R}}$ from $C(M,\mathbb{R}^{m})$ to itself.

\begin{definition}\label{semigroup}
For each $\varphi\in C(M,\mathbb{R}^{m})$, let $\mathbf{u}\in C(M\times[0,\infty),\mathbb{R})$ be the variational solution associated to system \eqref{HJs}, define
\begin{equation}
T^{-}_{t}\varphi(x)=\mathbf{u}(x,t),\hspace{1cm}\forall(x,t)\in M\times[0,\infty)
\end{equation}
\end{definition}

Due to the following proposition, the operator family $\{T^{-}_{t}\}_{t\geq0}$ really constitutes a semigroup, we call such an operator family the solution semigroup associated to system \eqref{HJs}.

\begin{proposition}\label{sg}
For any $t,s\geq0$, $T^{-}_{t+s}=T^{-}_{t}\circ T^{-}_{s}$.
\end{proposition}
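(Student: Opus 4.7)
The plan is to reduce the semigroup identity to the uniqueness of the variational solution (Lemma \ref{fp} and the extension argument that precedes Definition \ref{vas}). Fix $\varphi\in C(M,\mathbb{R}^m)$, let $\mathbf{u}$ be the variational solution with initial datum $\varphi$, and set $\psi:=T^-_s\varphi=\mathbf{u}(\cdot,s)$. Let $\mathbf{w}$ be the variational solution with initial datum $\psi$, so by definition $T^-_t\psi=\mathbf{w}(\cdot,t)$, hence $T^-_t\circ T^-_s\varphi=\mathbf{w}(\cdot,t)$. The goal is to show that the shifted function $\tilde{\mathbf{u}}(x,t):=\mathbf{u}(x,t+s)$ satisfies the variational identity in Definition \ref{vas} with initial datum $\psi$; uniqueness of variational solutions (furnished by Lemma \ref{fp}) then forces $\tilde{\mathbf{u}}=\mathbf{w}$, which is exactly $T^-_{t+s}\varphi=T^-_t\circ T^-_s\varphi$.

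For the upper bound $\tilde u_i(x,t)\le\psi_i(\gamma(0))+\int_0^t L_i(\gamma(\tau),\dot\gamma(\tau),\tilde{\mathbf{u}}(\gamma(\tau),\tau))\,d\tau$, I would take an arbitrary absolutely continuous $\gamma:[0,t]\to M$ with $\gamma(t)=x$ and, using Proposition \ref{ca}(ii), pick a calibrated curve $\xi:[0,s]\to M$ with $\xi(s)=\gamma(0)$ for $u_i(\gamma(0),s)$. The concatenation $\eta(\tau)=\xi(\tau)$ on $[0,s]$ and $\eta(\tau)=\gamma(\tau-s)$ on $[s,t+s]$ is an admissible competitor for $u_i(x,t+s)$, so Definition \ref{vas} applied at $(x,t+s)$ yields
\[
u_i(x,t+s)\le\varphi_i(\xi(0))+\int_0^s L_i(\xi,\dot\xi,\mathbf{u})\,d\tau+\int_0^t L_i(\gamma(\tau),\dot\gamma(\tau),\mathbf{u}(\gamma(\tau),\tau+s))\,d\tau,
\]
and the bracketed first two terms collapse to $u_i(\gamma(0),s)=\psi_i(\gamma(0))$ by the calibration. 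Taking the infimum over $\gamma$ gives the desired inequality.

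For the reverse inequality I would invoke Proposition \ref{ca}(ii) at $(x,t+s)$ to obtain a curve $\eta:[0,t+s]\to M$ with $\eta(t+s)=x$ realizing equality, then split its action integral at $\tau=s$. The calibration property (i) of Proposition \ref{ca} applied to $\eta|_{[0,s]}$ gives $u_i(\eta(s),s)\le\varphi_i(\eta(0))+\int_0^s L_i(\eta,\dot\eta,\mathbf{u})\,d\tau$, while the equality on $[0,t+s]$ forces this to be an equality as well; subtracting yields
\[
u_i(x,t+s)=\psi_i(\eta(s))+\int_0^t L_i(\eta(\tau+s),\dot\eta(\tau+s),\mathbf{u}(\eta(\tau+s),\tau+s))\,d\tau,
\]
and the right-hand side is $\ge\inf_{\gamma(t)=x}\{\psi_i(\gamma(0))+\int_0^t L_i(\gamma,\dot\gamma,\tilde{\mathbf{u}})\,d\tau\}$ by taking $\gamma(\tau)=\eta(\tau+s)$. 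Combined with the first step, $\tilde{\mathbf{u}}$ verifies \eqref{vas eq} with initial datum $\psi$, and Lemma \ref{fp} (together with the consistency of $\mathbf{u}_{\varphi,T}$ across different horizons $T$) gives uniqueness, completing the proof.

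The main obstacle is bookkeeping rather than conceptual: one must verify that the concatenation $\eta$ is genuinely absolutely continuous and an admissible competitor, and that the coupling term $L_i(\cdot,\cdot,\mathbf{u})$ evaluated along the shifted curve matches $L_i(\cdot,\cdot,\tilde{\mathbf{u}})$ on the nose — which it does because $\tilde{\mathbf{u}}$ and $\mathbf{u}(\cdot,\cdot+s)$ agree by definition, so no consistency issue arises between the coupling seen from the shifted problem and the original.
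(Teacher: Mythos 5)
Your argument is correct and takes essentially the same route as the paper: both reduce the semigroup identity to showing that the time-shift $\mathbf{u}(\cdot,\cdot+s)$ is the variational solution (equivalently, the fixed point of $\mathbb{A}_{T^{-}_{s}\varphi,T}$) with initial datum $T^{-}_{s}\varphi$, and then invoke the uniqueness furnished by Lemma \ref{fp}. Your two-inequality verification via calibrated curves (Proposition \ref{ca}) simply makes explicit the dynamic programming step that the paper carries out in one line by splitting the infimum defining $T^{-}_{t+s}\varphi$ at time $s$ and reparametrizing.
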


\begin{proof}
For every fixed $s\geq0$, we define
\begin{equation}
\begin{split}
\mathbf{u}(x,t)=T^{-}_{t}\circ T^{-}_{s}\varphi(x)\\
\mathbf{v}(x,t)=T^{-}_{t+s}\varphi(x).
\end{split}
\end{equation}

On one hand, by definition of $T^{-}_{t}$ and $\mathbf{u}$, for each $1\leq i\leq m$ and $0<t\leq T$,
\begin{align*}
&u_{i}(x,t)=\bigg[T^{-}_{t}\circ T^{-}_{s}\varphi\bigg]_{i}(x)\\
=&\inf_{\substack{\gamma_{i}(t)=x}}\bigg\{\bigg[T^{-}_{s}\varphi\bigg]_{i}(\gamma_{i}(0))+\int^{t}_{0}L_{i}(\gamma_{i}(\tau),\dot{\gamma}_{i}(\tau),T^{-}_{\tau}\circ T^{-}_{s}\varphi(\gamma_{i}(\tau)))d\tau\bigg\}\\
=&\inf_{\substack{\gamma_{i}(t)=x}}\bigg\{\bigg[T^{-}_{s}\varphi\bigg]_{i}(\gamma_{i}(0))+\int^{t}_{0}L_{i}(\gamma_{i}(\tau),\dot{\gamma}_{i}(\tau),\mathbf{u}(\gamma_{i}(\tau),\tau))d\tau\bigg\}\\
=&\mathbb{A}_{T^{-}_{s}\varphi}[\mathbf{u}]_{i}(x,t),
\end{align*}
where $\gamma_{i}\in C^{ac}([0,t],M)$, so
$$
\mathbf{u}=\mathbb{A}_{T^{-}_{s}\varphi}[\mathbf{u}].
$$

On the other hand, by definition of $T^{-}_{t}$ and $\mathbf{v}$, for each $1\leq i\leq m$ and $0<t\leq T$,
\begin{align*}
&v_{i}(x,t)=\bigg[T^{-}_{t+s}\varphi\bigg]_{i}(x)\\
=&\inf_{\substack{\gamma_{i}(t+s)=x}}\bigg\{\varphi_{i}(\gamma_{i}(0))+\int^{t+s}_{0}L_{i}(\gamma_{i}(\tau),\dot{\gamma}_{i}(\tau),T^{-}_{\tau}\varphi(\gamma_{i}(\tau)))d\tau\bigg\}\\
=&\inf_{\substack{\gamma_{i}(t+s)=x}}\bigg\{\varphi_{i}(\gamma_{i}(0))+\bigg(\int^{s}_{0}+\int^{t+s}_{s}\bigg)L_{i}(\gamma_{i}(\tau),\dot{\gamma}_{i}(\tau),T^{-}_{\tau}\varphi(\gamma_{i}(\tau)))d\tau\bigg\}\\
=&\inf_{\substack{\bar{\gamma}_{i}(t)=x}}\bigg\{\bigg[T^{-}_{s}\varphi\bigg]_{i}(\bar{\gamma}_{i}(0))+\int^{t}_{0}L_{i}(\bar{\gamma}_{i}(\tau),\dot{\bar{\gamma}}_{i}(\tau),T^{-}_{\tau+s}\varphi(\bar{\gamma}_{i}(\tau)))d\tau\bigg\}\\
=&\inf_{\substack{\bar{\gamma}_{i}(t)=x}}\bigg\{\bigg[T^{-}_{s}\varphi\bigg]_{i}(\bar{\gamma}_{i}(0))+\int^{t}_{0}L_{i}(\bar{\gamma}_{i}(\tau),\dot{\bar{\gamma}}_{i}(\tau),\mathbf{v}(\bar{\gamma}_{i}(\tau),\tau))d\tau\bigg\}\\
=&\mathbb{A}_{T^{-}_{s}\varphi}[\mathbf{v}]_{i}(x),
\end{align*}
where $\gamma_{i}\in C^{ac}([0,t+s],M)$ and $\bar{\gamma}_{i}\in C^{ac}([0,t],M)$, so
$$
\mathbf{v}=\mathbb{A}_{T^{-}_{s}\varphi}[\mathbf{v}].
$$

Since both $\mathbf{u},\mathbf{v}$ are fixed points of $\mathbb{A}_{T^{-}_{s}\varphi}$ in $C(M\times[0,T],\mathbb{R}^{m})$, they must be equal, i.e.,
$$
T^{-}_{t}\circ T^{-}_{s}\varphi(x)=\mathbf{u}(x,t)=\mathbf{v}(x,t)=T^{-}_{t+s}\varphi(x),\hspace{0.3cm}(x,t)\in M\times[0,\infty),
$$
this completes the proof.
\end{proof}

\section{Representation of the viscosity solution}
This section devotes to a proof of Theorem \ref{the1}. The main theme focus on the uniqueness of viscosity solution in our setting. Note that, to the best of our knowledge, the existence and uniqueness of viscosity solutions for system \eqref{HJs} was firstly established by \cite{EL,IK} under certain  monotonicity assumptions at early 90s.

We remark here that our method presents a more dynamical flavor and we do not pursue optimality of assumptions, namely (H1)-(H3), under which the Theorem holds true. As suggested by Prof. Ishii, for the existence and uniqueness of the solution to system \eqref{HJs} satisfying (H3), more general assumptions are enough.

\vspace{0.3cm}
\subsection{Representation of solutions to special scalar equations}
~\\
Our proof of Theorem \ref{the1} is based on the representation of viscosity solutions to scalar Hamilton-Jacobi equations. We begin to consider $H:T^{\ast}M\times\mathbb{R}^{m}\rightarrow\mathbb{R}$ satisfying (H1)-(H3) and the following Cauchy problem:
\begin{equation}\label{HJe}
\left\{
  \begin{array}{ll}
    \partial_{t}u+H(x,\partial_{x}u,\bar{\mathbf{u}}(x,t))=0, & \hbox{$(x,t)\in M\times[0,\infty)$;}\\
    u(0,x)=\varphi(x), & \hbox{$x\in M$.}
  \end{array}
\right.
\end{equation}
where $\bar{\mathbf{u}}\in C^{2}(M\times[0,\infty),\mathbb{R}^{m})$ and the initial data $\varphi\in C(M,\mathbb{R})$.

\vspace{0.3cm}
Equation \eqref{HJe} is a classical Hamilton-Jacobi equation with a Tonelli Hamiltonian
$$
h(x,p,t):=H(x,\partial_{x}u,\bar{\mathbf{u}}(x,t))
$$
Denote $\Phi^{t}_{h}$ the local Hamiltonian flow induced by $h$. First, it is easily seen that

\begin{lemma}\label{complete}
$\Phi^{t}_{h}$ is defined on $[0,\infty)$.
\end{lemma}

\begin{proof}
Note that, for any $0\leq t\leq T$, along $\Phi^{t}_{\tilde{H}}$,
\begin{equation*}
\left|\frac{d}{dt}h\circ\phi^{t}_{h}\right|=\left|\frac{\partial}{\partial t}h\circ\phi^{t}_{h}\right|\leq\Theta\|\partial_{t}\bar{\mathbf{u}}(x(t),t)\|\leq\Theta\|\bar{\mathbf{u}}\|_{C^{2},M\times[0,T]}.
\end{equation*}
Thus, the compactness of $M\times[0,T]$ and (H2) implies the lemma.
\end{proof}

\vspace{0.2cm}
Set
$$
l(x,\dot{x},t):=L(x,\dot{x},\bar{\mathbf{u}}(x,t)),
$$
where $L$ is the convex dual of $H$, then $l$ is the corresponding Lagrangian of $h$. Denote by $\phi^{t}_{l}$ the local Lagrangian flow induced by $l$. $\Phi^{t}_{h}$ and $\phi^{t}_{l}$ are related by $\mathcal{L}:TM\times[0,T]\rightarrow T^{\ast}M\times[0,T]:$
$$
(x,\dot{x},t)\mapsto(x,\partial_{\dot{x}}l(x,\dot{x},t),t),
$$
By Lemma \ref{complete} and the assumptions on $h$, for $t\geq0$,
$$
\Phi^{t}_{h}\circ\mathcal{L}=\mathcal{L}\circ\phi^{t}_{l}.
$$
In particular,
\begin{corollary}\label{complete'}
$\phi^{t}_{l}$ is also defined on $[0,\infty)$.
\end{corollary}

\vspace{0.3cm}
Define
\begin{equation}\label{rep}
u(x,t)=\inf_{\substack{\gamma(t)=x\\ \gamma\in C^{ac}([0,t],M)}}\bigg\{\varphi(\gamma(0))+\int^{t}_{0}l(\gamma(s),\dot{\gamma}(s),s)ds\bigg\},
\end{equation}
The above discussion and \cite[Theorem 3.7.2, Page 108]{Fa} implies that every absolutely continuous curve $\xi$ that achieves the infimum in \eqref{rep} is a $C^{2}$ orbit of $\phi^{t}_{l}$. This leads to the fact that $u$ is a viscosity solution to the Cauchy problem \eqref{HJe}. With a bit more work, we can show that

\begin{lemma}\label{Lip2}
$u$ is locally Lipschitzian on $M\times(0,\infty)$.
\end{lemma}

\begin{proof}
By Corollary \ref{complete'}, \cite[Proposition 4.4.4, Page 138]{Fa} states that, for every $t>0$, there is a constant $A_{t}>0$ such that for any $\xi:[0,t]\rightarrow M$ achieving the infimum in \eqref{rep},
\begin{equation*}
\int^{t}_{0}l(\xi(s),\dot{\xi}(s),s)ds\leq A_{t}.
\end{equation*}
Thus there is  $s_{0}\in[0,t]$ such that $l(\xi(s_{0}),\dot{\xi}(s_{0}),s_{0})=L(\xi(s_{0}),\dot{\xi}(s_{0}),\bar{\mathbf{u}}(\xi(s_{0}),s_{0}))\leq\frac{A_{t}}{t}$ and
\begin{equation*}
(\xi(s_{0}),\dot{\xi}(s_{0}))\in\mathcal{K}_{t}:=\{(x,\dot{x})|L(x,\dot{x},\mathbf{0})\leq\frac{A_{t}}{t}+\Theta\|\mathbf{u}\|_{C^{2},M\times[0,t]}\},
\end{equation*}
which is a compact subset of $TM$. For each $s\in[0,t]$, let $p(s)=\partial_{\dot{x}}l(\xi(s),\dot{\xi}(s),s)$, as in the proof of Lemma \ref{complete},
$$
H(\xi(s),p(s),\mathbf{0})\leq H_{t}+\Theta\|\mathbf{u}\|_{C^{2},M\times[0,t]}\cdot t
$$
where $H_{t}=\sup_{\mathcal{L}\mathcal{K}_{t}\times[0,t]}h(x,p,s)$. This implies that any minimizer $\xi:[0,t]\rightarrow M$ for the Equation \eqref{rep} is uniformly Lipschitzian with a Lipschitz constant only depend on $t$. Then the same proof of \cite[Lemma 4.6.3, Page 148]{Fa} shows the lemma.
\end{proof}

Now we recall a comparison result without proof, it is just an adaption of \cite[Corollary 5.1, Page 66]{B} to our case. For readers that are interested in the PDE aspects of the theory of viscosity solution, we recommend \cite{B} as an excellent survey.

\begin{lemma}\label{comparison}
Let $T>0$ and $h\in C(T^{\ast}M\times\mathbb{R},\mathbb{R})$, if $u,v\in C(M\times[0,T],\mathbb{R})$ are respectively sub and supersolution of
\begin{equation*}
\partial_{t}u+h(x,\partial_{x}u,t)=0,\hspace{0.3cm}\text{ for }(x,t)\in M\times(0,T)
\end{equation*}
and either $u$ or $v$ is Lipschitz continuous in $x$, uniformly with respect to $t$, then
\begin{equation*}
\max_{\substack{M\times[0,T]}}(u-v)^+\leq\max_{\substack{M}}(u(x,0)-v(x,0))^+,
\end{equation*}
where $(\cdot)^+:=\max\{\cdot,0\}$.
\end{lemma}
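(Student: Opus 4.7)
The plan is to apply the doubling-of-variables technique of Crandall, Ishii, and Lions in its parabolic form; the Lipschitz hypothesis on one of $u, v$ delivers an a priori bound on the penalty gradient that bypasses Ishii's lemma and parabolic semijets entirely. Without loss of generality I take $u$ to be $L$-Lipschitz in $x$, uniformly in $t$. Set $M_0 := \max_M(u(\cdot,0)-v(\cdot,0))^+$ and argue by contradiction: suppose $\max_{M\times[0,T]}(u-v) \geq M_0 + 3\delta$ for some $\delta > 0$, realized at some $(x_0, t_0)$ with $t_0 > 0$. For small parameters $\epsilon, \eta > 0$ I would introduce
\begin{equation*}
\Phi_\epsilon(x,y,t,s) = u(x,t) - v(y,s) - \frac{|x-y|^2 + (t-s)^2}{\epsilon^2} - \frac{\eta}{T-t}
\end{equation*}
on $M\times M\times[0,T)\times[0,T]$; the blow-up of $-\eta/(T-t)$ at $t=T$ automatically forces a maximizer $(\bar x, \bar y, \bar t, \bar s)$ to satisfy $\bar t < T$, and comparison with $\Phi_\epsilon(x_0,x_0,t_0,t_0)$ combined with the gap $3\delta$ forces $\bar t, \bar s > 0$ for all $\epsilon, \eta$ sufficiently small.

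The crucial a priori estimate, which the Lipschitz hypothesis supplies for free, comes from $\Phi_\epsilon(\bar x,\bar y,\bar t,\bar s) \geq \Phi_\epsilon(\bar y,\bar y,\bar t,\bar s)$: this gives $|\bar x-\bar y|^2/\epsilon^2 \leq u(\bar x,\bar t) - u(\bar y,\bar t) \leq L|\bar x-\bar y|$, hence $p_\epsilon := 2(\bar x-\bar y)/\epsilon^2$ satisfies $|p_\epsilon| \leq 2L$ uniformly in $\epsilon$, and $|\bar x-\bar y| = O(\epsilon^2)$. Standard boundedness of the penalty also gives $|\bar t-\bar s| = O(\epsilon)$. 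Applying the subsolution inequality for $u$ with the smooth test function obtained by freezing $\Phi_\epsilon$ in the $(y,s)$-variables at $(\bar y,\bar s)$, and the supersolution inequality for $v$ with the test obtained by freezing in $(x,t)$ at $(\bar x,\bar t)$, I obtain
\begin{equation*}
\frac{2(\bar t-\bar s)}{\epsilon^2} + \frac{\eta}{(T-\bar t)^2} + G(\bar x, p_\epsilon, \bar t) \leq 0, \qquad \frac{2(\bar t-\bar s)}{\epsilon^2} + G(\bar y, p_\epsilon, \bar s) \geq 0.
\end{equation*}
Subtracting yields $\eta/T^2 \leq \eta/(T-\bar t)^2 \leq G(\bar y, p_\epsilon, \bar s) - G(\bar x, p_\epsilon, \bar t)$, whose right-hand side tends to $0$ as $\epsilon \to 0$ by uniform continuity of $G$ on the compact set $M \times \overline{B(0,2L)} \times [0,T]$, contradicting $\eta > 0$.

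The hard part will be the bookkeeping that guarantees $\bar t, \bar s > 0$ for all sufficiently small $\epsilon$: this rests on combining the strict gap $3\delta$ with the uniform continuity of $u(\cdot,0) - v(\cdot,0)$, and on exploiting the quadratic localization $|\bar x - \bar y|, |\bar t - \bar s|\to 0$ provided by the penalty, so that the initial-time contribution to $\sup\Phi_\epsilon$ is bounded by $M_0 + o(1)$. Every other step is routine, because the Lipschitz assumption furnishes precisely the a priori bound on $p_\epsilon$ that makes the continuity modulus of $G$ applicable on a compact set, which is what allows us to avoid Ishii's lemma entirely.
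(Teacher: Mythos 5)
The paper does not prove this lemma; it cites Corollary~5.1 of Barles' lecture notes and states explicitly that it is used ``without proof.'' Your doubling-of-variables argument is a correct and self-contained derivation, essentially reproducing the standard proof one finds in such references. Two points to keep in mind when writing it out fully. First, the a priori gradient bound $|p_\epsilon|\leq 2L$ is the only ingredient that is not purely generic, and you identify it correctly from $\Phi_\epsilon(\bar x,\bar y,\bar t,\bar s)\geq\Phi_\epsilon(\bar y,\bar y,\bar t,\bar s)$ together with the Lipschitz hypothesis; note that if $v$ rather than $u$ is Lipschitz, the analogous comparison $\Phi_\epsilon(\bar x,\bar y,\bar t,\bar s)\geq\Phi_\epsilon(\bar x,\bar x,\bar t,\bar s)$ gives the same bound, so the ``WLOG'' is harmless. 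Second, the ``bookkeeping'' you defer (forcing $\bar t,\bar s>0$) does close: for $\eta$ fixed small enough that $\Phi_\epsilon(\bar x,\bar y,\bar t,\bar s)\geq M_0+2\delta$, the penalties give $|\bar x-\bar y|=O(\epsilon^2)$ and $|\bar t-\bar s|=O(\epsilon)$, so if $\bar s\to 0$ along a subsequence then so does $\bar t$, and along a further subsequence $\bar x,\bar y\to z$; then $\Phi_\epsilon(\bar x,\bar y,\bar t,\bar s)\leq u(\bar x,\bar t)-v(\bar y,\bar s)\to u(z,0)-v(z,0)\leq M_0$, contradicting the lower bound $M_0+2\delta$. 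One should also note that, for fixed $\eta$, the term $\eta/(T-\bar t)$ bounds $\bar t$ away from $T$, whence $\bar s<T$ for $\epsilon$ small since $|\bar t-\bar s|=O(\epsilon)$; this is needed so that the supersolution inequality may be invoked at $(\bar y,\bar s)$.
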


Thanks to Lemma \ref{Lip2}, we are able to show
\begin{lemma}\label{unique}
$u$ is the unique viscosity solution to the Cauchy problem \eqref{HJe}.
\end{lemma}

\begin{proof}
For any $\delta>0$, $u$ is Lipschitz continuous in $x$, uniformly in $t$ on $M\times[\delta,T]$. Assuming that there is another viscosity solution $v\in C(M\times[0,T],\mathbb{R})$, by Lemma \ref{comparison}, we conclude that
\begin{equation}\label{1}
\max_{M\times[0,T]}(u-v)^+\leq\max_{M}(u(\cdot,\delta)-v(\cdot,\delta))^+.
\end{equation}
Let $\delta$ in \eqref{1} goes to $0$, the continuity of $u$ and $v$ implies that $v\equiv u$ on $M\times[0,T]$, this completes the proof.
\end{proof}

\begin{remark}
It is worth mentioning here that, in general, for a given $L(x,\dot{x},t)\in C^2(TM\times[0,T],\mathbb{R})$, the action minimizing curves may not be of class $C^2$ and fail to be an extremal of the corresponding Euler-Lagrangian equation, see \cite{BM}.
\end{remark}

\subsection{Variational solution and viscosity solution}
~\\
Let $\mathbf{u}\in C(M\times[0,\infty),\mathbb{R}^{m})$ be the variational solution associated to \eqref{HJs}. By Definition \ref{vis}, to prove that $\mathbf{u}$ is a viscosity solution to the system \eqref{HJs}, we only need to show that, for any $1\leq i\leq m$, $u_{i}$ is a viscosity solution to the equation
\begin{equation}\label{hj}
\left\{
  \begin{array}{ll}
    \partial_{t}u+H_{i}(x,\partial_{x}u,\mathbf{u}(x,t))=0, & \hbox{$(x,t)\in M\times[0,\infty)$;} \\
    u(x,0)=\varphi_{i}(x), & \hbox{$x\in M$.}
  \end{array}
\right.
\end{equation}
Now we fix $i$ and $T>0$, for any $\epsilon>0$, there exists $\mathbf{u}_{\epsilon}\in C^{2}(M\times[0,T],\mathbb{R}^{m})$ such that $\|\mathbf{u}_{\epsilon}-\mathbf{u}\|_{\infty,M\times[0,T]}<\epsilon$. We set $h,h_{\epsilon}:T^{\ast}M\times[0,T]\rightarrow\mathbb{R}$ as
\begin{equation}\label{aux}
\begin{split}
h(x,p,t)=H_{i}(x,p,\mathbf{u}(x,t))\\
h_{\epsilon}(x,p,t)=H_{i}(x,p,\mathbf{u}_{\epsilon}(x,t))
\end{split}
\end{equation}

Combining the above construction and the results obtained before, we can show that
\begin{theorem}
The variational solution associated to system \eqref{HJs} is a viscosity solution.
\end{theorem}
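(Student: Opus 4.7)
The plan is to leverage Lemma \ref{Hje} via a mollification plus stability argument, rather than verifying the viscosity inequalities directly from the variational formula. Fix $T>0$ and an index $1\le i\le m$, and let $\mathbf{u}$ be the variational solution given by Lemma \ref{fp}. Approximate $\mathbf{u}$ by a sequence $\mathbf{u}^{\varepsilon}\in C^{2}(M\times[0,T],\mathbb{R}^{m})$ with $\|\mathbf{u}^{\varepsilon}-\mathbf{u}\|_{\infty,M\times[0,T]}\to0$ as $\varepsilon\to0$, via standard mollification in $x$ and $t$ after extending $\mathbf{u}$ by its initial trace $\varphi$ across $t=0$.

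For each $\varepsilon>0$, apply Lemma \ref{Hje} with $\bar{\mathbf{u}}=\mathbf{u}^{\varepsilon}$ and Hamiltonian $H_{i}$: the function
\[
u_{i}^{\varepsilon}(x,t)=\inf_{\substack{\gamma(t)=x\\ \gamma\in C^{ac}([0,t],M)}}\Bigl\{\varphi_{i}(\gamma(0))+\int_{0}^{t}L_{i}(\gamma(s),\dot{\gamma}(s),\mathbf{u}^{\varepsilon}(\gamma(s),s))\,ds\Bigr\}
\]
is then the unique viscosity solution of the scalar equation $\partial_{t}w+H_{i}(x,\partial_{x}w,\mathbf{u}^{\varepsilon}(x,t))=0$ with initial datum $\varphi_{i}$, and is locally Lipschitz on $M\times(0,T]$.

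Next, I would establish the uniform convergence $u_{i}^{\varepsilon}\to u_{i}$ on $M\times[0,T]$. This is immediate from condition (L4), mimicking the first estimate in the proof of Lemma \ref{fp}: for any $(x,t)\in M\times[0,T]$, picking a minimizer $\gamma^{\varepsilon}$ for $u_{i}^{\varepsilon}(x,t)$ and testing it in the formula for $u_{i}(x,t)$ (and vice versa) yields
\[
|u_{i}^{\varepsilon}(x,t)-u_{i}(x,t)|\leq\Theta\,T\,\|\mathbf{u}^{\varepsilon}-\mathbf{u}\|_{\infty,M\times[0,T]}\longrightarrow 0.
\]
Combined with continuity of $H_{i}$ in $\mathbf{u}$ (so that $H_{i}(x,p,\mathbf{u}^{\varepsilon}(x,t))\to H_{i}(x,p,\mathbf{u}(x,t))$ locally uniformly), the standard stability theorem for viscosity solutions passes to the limit: $u_{i}$ is a viscosity solution of $\partial_{t}u_{i}+H_{i}(x,\partial_{x}u_{i},\mathbf{u}(x,t))=0$ on $M\times(0,T)$, and $u_{i}(\cdot,0)=\varphi_{i}$ holds by construction. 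Since $T$ is arbitrary and $i$ was arbitrary, $\mathbf{u}$ is a viscosity solution of the full system \eqref{HJs}.

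The only potentially delicate point is the availability of a $C^{2}$ mollification with the stated uniform convergence; this is purely routine because we need no control on derivatives of $\mathbf{u}^{\varepsilon}$ beyond the fact that it is $C^{2}$ (the hypotheses of Lemma \ref{Hje} require only $C^{2}$ regularity of $\bar{\mathbf{u}}$, not quantitative bounds). An alternative, more hands-on route would be to verify the sub/super-solution inequalities at a test point $(x_{0},t_{0})$ using Proposition \ref{ca}: for the subsolution inequality, plug the constant-velocity curve $\gamma(s)=x_{0}+(s-t_{0})v$ into (i), divide by $h$, let $h\to 0^{+}$, and take the infimum over $v\in\mathbb{R}^{N}$ using the Legendre-Fenchel identity; for the supersolution inequality, use the calibrated curve provided by (ii) and the inequality $\langle p,\dot{\gamma}\rangle-L_{i}\leq H_{i}$. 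This direct argument requires a uniform Lipschitz bound on calibrated curves to pass to the limit $h\to 0^{+}$, which, in the present setting of merely continuous coefficient $\mathbf{u}$, is less clean than the mollification route chosen above.
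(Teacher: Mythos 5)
Your proposal is correct and follows essentially the same approach as the paper's proof: approximate $\mathbf{u}$ by $C^{2}$ functions $\mathbf{u}^{\varepsilon}$, invoke Lemma \ref{Hje} to identify the scalar viscosity solution $u_i^{\varepsilon}$ with the corresponding value function, obtain uniform convergence $u_i^{\varepsilon}\to u_i$ from (L4), and conclude via the stability theorem. The only superficial difference is that you spell out the mollification step and briefly sketch an alternative direct route, whereas the paper simply asserts the existence of the $C^2$ approximants; the core argument is the same.
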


\begin{proof}
Denote by $u_{i}^{\epsilon}:M\times[0,T]\rightarrow\mathbb{R}$ be the viscosity solution of
\begin{equation*}
\left\{
  \begin{array}{ll}
    \partial_{t}u+h_{\epsilon}(x,\partial_{x}u,t)=0, & \hbox{$(x,t)\in M\times[0,T]$;} \\
    u(x,0)=\varphi_{i}(x), & \hbox{$x\in M$,}
  \end{array}
\right.
\end{equation*}
then by Lemma \ref{unique}, for $(x,t)\in M\times[0,T]$,
\begin{equation}\label{rep1}
u_{i}^{\epsilon}(x,t)=\inf_{\substack{\gamma_{i}(t)=x\\ \gamma_{i}\in C^{ac}([0,t],M)}}\bigg\{\varphi(\gamma_{i}(0))+\int^{t}_{0}L_{i}(\gamma_{i}(s),\dot{\gamma}_{i}(s),\mathbf{u}_{\epsilon}(\gamma_{i}(s),s))ds\bigg\}.
\end{equation}

By Definition \ref{vas} and \eqref{rep1}, for any $0\leq t\leq T$,
\begin{align*}
&|u_{i}(x,t)-u_{i}^{\epsilon}(x,t)|\\
\leq&\sup_{\gamma_{i}}\bigg|\int^{t}_{0}L_{i}(\gamma_{i}(s),\dot{\gamma}_{i}(s),\mathbf{u}(\gamma_{i}(s),s))ds-\int^{t}_{0}L_{i}(\gamma_{i}(s),\dot{\gamma}_{i}(s),\mathbf{u}_{\epsilon}(\gamma_{i}(s),s))ds\bigg|\\
\leq&\sup_{\gamma_{i}}\int^{t}_{0}|L_{i}(\gamma_{i}(s),\dot{\gamma}_{i}(s),\mathbf{u}(\gamma_{i}(s),s))-L_{i}(\gamma_{i}(s),\dot{\gamma}_{i}(s),\mathbf{u}_{\epsilon}(\gamma_{i}(s),s))|ds\\
\leq&\Theta\sup_{\gamma_{i}}\int^{t}_{0}\|\mathbf{u}(\gamma_{i}(s),s))-\mathbf{u}_{\epsilon}(\gamma_{i}(s),s)\|ds\\
\leq&\Theta t\|\mathbf{u}_{\epsilon}-\mathbf{u}\|_{\infty,M\times[0,T]}\leq\Theta T\|\mathbf{u}_{\epsilon}-\mathbf{u}\|_{\infty,M\times[0,T]}\leq\Theta T\epsilon,
\end{align*}
where $\gamma_{i}\in C^{ac}([0,t],M)$ with $\gamma_{i}(t)=x$. Thus $\|u_{i}-u_{i}^{\epsilon}\|_{\infty,M\times[0,T]}\leq\Theta T\epsilon$, which implies that $u_{i}^{\epsilon}$ converges uniformly to $u_{i}$ on $M\times[0,T]$ as $\epsilon\rightarrow0$. Now we have
\begin{itemize}
  \item $h_{\epsilon}(x,p,t)$ converges uniformly to $h(x,p,t)$ on $T^{\ast}M\times[0,T]$ as $\epsilon\rightarrow0$, more precisely,
        \begin{equation*}
        |h_{\epsilon}(x,p,t)-h(x,p,t)|\leq\Theta\|\mathbf{u}_{\epsilon}-\mathbf{u}\|_{\infty,M\times[0,T]}\leq\Theta\epsilon,
        \end{equation*}

  \item $u_{i}^{\epsilon}$ is the viscosity solution to $\partial_{t}u+h_{\epsilon}(x,\partial_{x}u,t)=0$ with $u_{i}^{\epsilon}(x,0)=\varphi_i(x)$,

  \item $u_{i}^{\epsilon}$ converges uniformly to $u_{i}$ on $M\times[0,T]$ as $\epsilon\rightarrow0$.
\end{itemize}
By the stability of viscosity solutions, $u_{i}$ is a viscosity solution to \eqref{hj} on $M\times[0,T]$. Since $T>0$ is arbitrary, we complete the proof.
\end{proof}

\vspace{0.3cm}
Finally, we are able to show that
\begin{theorem}
Viscosity solution to the system \eqref{HJs} is identical with the variational solution.
\end{theorem}

\begin{proof}
Let $\mathbf{v}$ be a viscosity solution to the system \eqref{HJs}, by Definition \ref{vis}, its components $v_{i}$ is a viscosity solution to the scalar Equation \eqref{hj} with $\mathbf{u}$ replaced by $\mathbf{v}$. We shall prove that for $1\leq i\leq m, v_{i}\equiv u_{i}$ on $M\times[0,\infty)$, where $u_{i}$ is the $i$-th component of $\mathbf{u}$.

Let $c_{\epsilon}:=\Theta\epsilon>0$, by using the notation introduced in \eqref{aux}, we have
\begin{equation}\label{compa eq}
h_{\epsilon}(x,p,t)-c_{\epsilon}<h(x,p,t)<h_{\epsilon}(x,p,t)+c_{\epsilon}
\end{equation}
and that $h_{\epsilon}(x,p,t)\pm c_{\epsilon}$ converges uniformly to $h(x,p,t)$ on $T^{\ast}M\times[0,T]$ as $\epsilon\rightarrow0$.

\vspace{0.3cm}
Let $u_{i,\pm}^{\epsilon}:M\times[0,T]\rightarrow\mathbb{R}$ be the unique viscosity solution to
\begin{equation*}
\left\{
  \begin{array}{ll}
    \partial_{t}u+h_{\epsilon}(x,\partial_{x}u,t)\pm c_{\epsilon}=0, & \hbox{$(x,t)\in M\times[0,T]$;} \\
    u(x,0)=\varphi_{i}(x), & \hbox{$x\in M$,}
  \end{array}
\right.
\end{equation*}
respectively. By Lemma \ref{Lip2}, $u_{i,\pm}^{\epsilon}$ is Lipschitz continuous in $x$, uniformly in $t$ on $M\times[\delta,T]$. On the other hand, by \eqref{compa eq}, $v_{i}|_{M\times(\delta,T)}$ is supersolution and subsolution to
\begin{equation*}
\partial_{t}u+h_{\epsilon}(x,\partial_{x}u,t)+c_{\epsilon}=0,\hspace{0.3cm}\text{ for }(x,t)\in M\times(\delta,T),
\end{equation*}
and
\begin{equation*}
\partial_{t}u+h_{\epsilon}(x,\partial_{x}u,t)-c_{\epsilon}=0,\hspace{0.3cm}\text{ for }(x,t)\in M\times(\delta,T),
\end{equation*}
respectively. Lemma \ref{comparison} implies that
\begin{equation}\label{compa eq2}
\begin{split}
\max_{\substack{M\times[\delta,T]}}(u_{i,+}^{\epsilon}-v_{i})^+\leq\max_{\substack{M}}(u_{i,+}^{\epsilon}(\cdot,\delta)-v_{i}(\cdot,\delta))^+\\
\max_{\substack{M\times[\delta,T]}}(v_{i}-u_{i,-}^{\epsilon})^+\leq\max_{\substack{M}}(v_{i}(\cdot,\delta)-u_{i,-}^{\epsilon}(\cdot,\delta))^+.
\end{split}
\end{equation}
By letting $\delta$ in \eqref{compa eq2} goes to $0$, we obtain
\begin{equation}\label{ord}
u^{\epsilon}_{i,+}(x,t)\leq v_{i}(x,t)\leq u^{\epsilon}_{i,-}(x,t),\hspace{0.3cm}\text{ for }(x,t)\in M\times[0,T].
\end{equation}

By Lemma \ref{unique},
\begin{equation*}
u^{\epsilon}_{i,\pm}(x,t)=\inf_{\substack{\gamma_{i}(t)=x\\ \gamma_{i}\in C^{ac}([0,t],M)}}\bigg\{\varphi(\gamma_{i}(0))+\int^{t}_{0}[L_{i}(\gamma_{i}(s),\dot{\gamma}_{i}(s),\mathbf{u}_{\epsilon}(\gamma_{i}(s),s))\mp c_{\epsilon}]ds\bigg\},
\end{equation*}
The above formula shows that $\|u_{i,+}^{\epsilon}-u_{i,-}^{\epsilon}\|_{\infty,M\times[0,T]}\leq2c_{\epsilon}T$. So by \eqref{ord},
$$
D^{1}_{\epsilon}:=\|v_{i}-u_{i,-}^{\epsilon}\|_{\infty,M\times[0,T]}\leq2c_{\epsilon}T.
$$
For $0\leq t\leq T$, set $D_{\epsilon}:=\|u_{i}-u_{i,-}^{\epsilon}\|_{\infty,M\times[0,T]}$, then
\begin{align*}
D^{2}_{\epsilon}\leq&\sup_{\gamma_{i}}\bigg|\int^{t}_{0}L_{i}(\gamma_{i}(s),\dot{\gamma}_{i}(s),\mathbf{u}(\gamma_{i}(s),s))ds-\int^{t}_{0}[L_{i}(\gamma_{i}(s),\dot{\gamma}_{i}(s),\mathbf{u}_{\epsilon}(\gamma_{i}(s),s))+c_{\epsilon}]ds\bigg|\\
\leq&\sup_{\gamma_{i}}\int^{t}_{0}|L_{i}(\gamma_{i}(s),\dot{\gamma}_{i}(s),\mathbf{u}(\gamma_{i}(s),s))-L_{i}(\gamma_{i}(s),\dot{\gamma}_{i}(s),\mathbf{u}_{\epsilon}(\gamma_{i}(s),s))|ds+c_{\epsilon}t\\
\leq&\sup_{\gamma_{i}}\int^{t}_{0}\Theta|\mathbf{u}(\gamma_{i}(s),s))-\mathbf{u}_{\epsilon}(\gamma_{i}(s),s)|ds+c_{\epsilon}t\leq2c_{\epsilon}T,
\end{align*}
where $\gamma_{i}\in C^{ac}([0,t],M)$ with $\gamma_{i}(t)=x$. So for $(x,t)\in M\times[0,T]$,
\begin{equation*}
|v_{i}(x,t)-u_{i}(x,t)|\leq|v_{i}(x,t)-u^{\epsilon}_{i,-}(x,t)|+|u_{i}(x,t)-u^{\epsilon}_{i,-}(x,t)|\leq D^{1}_{\epsilon}+D^{2}_{\epsilon}\leq4c_{\epsilon}T.
\end{equation*}
We complete the proof by letting $\epsilon$ goes to $0$.
\end{proof}

\section{Regularizing effect in a model problem}
In this section, we shall prove Theorem \ref{Thm Lip11} for system \eqref{HJs} with Hamiltonians satisfying (H1)-(H3) and (H*). Such model systems generalize the model building from classical mechanics with weakly coupled potential, thus are typical examples of system \eqref{HJs} of Tonelli type.

We begin to note that for $1\leq i\leq m, L_{i}:TM\times\mathbb{R}^{m}\rightarrow\mathbb{R}$ satisfies
\begin{itemize}[\bf (L*)]
  \item \begin{equation}\label{eq:3}
        L_{i}(x,\dot{x},\mathbf{u})=l_{i}(x,\dot{x})-P_{i}(x,\mathbf{u}),
        \end{equation}
        and
        \begin{align*}
        \frac{1}{A}|\dot{x}|^{r^{\ast}}\leq l_{i}(x,\dot{x})&\leq A(|\dot{x}|^{r^{\ast}}+1),\\
        |\partial_{x}l_{i}(x,\dot{x})|&\leq A(|\dot{x}|^{r^{\ast}}+1).
       \end{align*}
\end{itemize}
Moreover, for $|\dot{x}|\geq A$,
\begin{equation}\label{add}
|\partial_{\dot{x}}l_{i}(x,\dot{x})|\leq A|\dot{x}|^{r^{\ast}-1},
\end{equation}
where $r^{\ast}=\frac{r}{r-1}$ is the conjugate index with respect to $r$. We use the notation $\mathbf{l}:=(l_{1},...,l_{m}),\mathbf{P}:=(P_{1},...,P_{m})$ for later use.

\begin{remark}
We may choose $A>0$ sufficiently large to ensure both (H*) and (L*) hold. By (L3) and (L*), $\mathbf{P}$ is Lipschitzian in $\mathbf{u}$ with Lipschitz constant $\Theta$.
\end{remark}

\begin{remark}
We interpret the additional inequality \eqref{add} here. Set $p=\frac{\partial l_{i}}{\partial\dot{x}}(x,\dot{x})$, then
\begin{equation}\label{eq:4}
h_{i}(x,p)=\sup_{\dot{x}\in T_{x}M}\{\langle p,\dot{x}\rangle-l_{i}(x,\dot{x})\}=\langle\frac{\partial l_{i}}{\partial\dot{x}}(x,\dot{x}),\dot{x}\rangle-l_{i}(x,\dot{x}).
\end{equation}
By (L1), $l_{i}(x,\dot{x})-l_{i}(x,0)\leq\langle p,\dot{x}\rangle\leq l_{i}(x,2\dot{x})-l_{i}(x,\dot{x})$. Combining with \eqref{eq:4} and (L*),
\begin{equation}\label{eq:6}
\frac{1}{A}|\dot{x}|^{r^{\ast}}\leq\langle p,\dot{x}\rangle\leq2^{r^{\ast}}A|\dot{x}|^{r^{\ast}}+A,\quad h_{i}(x,p)\leq2^{r^{\ast}}A|\dot{x}|^{r^{\ast}}+A
\end{equation}
\vspace{0.2cm}
Notice that there is $\dot{x}^{\prime}\in T_{x}M$ such that $|\dot{x}^{\prime}|=|\dot{x}|$ and $\langle\dot{x}^{\prime}, p\rangle=|\dot{x}|\cdot|p|$, thus
\vspace{0.1cm}
$$
|\dot{x}|\cdot|p|-A(|\dot{x}|^{r^{\ast}}+1)\leq\langle\dot{x}^{\prime}, p\rangle-l(x,\dot{x}^{\prime})\leq h_{i}(x,p)\leq2^{r^{\ast}}A|\dot{x}|^{r^{\ast}}+A.
$$
Combining Equation \eqref{eq:6}, for $|\dot{x}|>A$,
\begin{equation}\label{eq:11}
\frac{1}{A}|\dot{x}|^{r^{\ast}-1}\leq|p|\leq(2^{r^{\ast}}+3)A|\dot{x}|^{r^{\ast}-1}.
\end{equation}
We could replace $A$ by $(2^{r^{\ast}}+3)A$ to ensure that both (L*) and Equation \eqref{add} hold.
\end{remark}

Let $t\in[0,T],\mathbf{u}\in C(M\times[0,T],\mathbb{R}^{m})$, we set
\begin{itemize}[$-$]
  \item $l(A):=\|\mathbf{l}\|_{C^{1},\{(x,\dot{x})||\dot{x}|\leq A\}}$.

  \item $U(t):=\sup_{n\geq0}\|\mathbb{A}^{n}_{\varphi}[\mathbf{u}]\|_{\infty}$, which is finite by Proposition \ref{fp}.

  \item $P(t):=\|\mathbf{P}\|_{C^{1},M\times[-U(t),U(t)]^{m}}$.

  \item $F(t):=2U(t)+t\cdot(P(t)+1)$.
\end{itemize}
where $\mathbb{A}_{\varphi}$ is defined by Equation \eqref{eq:2}. We are ready to show the following \textit{a priori} estimates:
\begin{lemma}\label{x-Lip lem}
Let
\begin{align*}
t_{\Theta}&:=\min\bigg\{1,\bigg(\frac{1+\frac{1}{r^{\ast}}}{2\Theta}\bigg)^{r^{\ast}}\bigg\}\\
\kappa_{0}&:=2(2C_{1}+C_{2})\max\{F(1),1\}+P(1),
\end{align*}
where $C_{i}>0, i=1,2$ are constants only depend on $A, r$. If $\mathbf{u}\in C(M\times[0,T],\mathbb{R}^{m})$ satisfies for any $x,y\in M$ and $t\in(0,t_{\Theta}]$,
\begin{equation}
\|\mathbf{u}(x,t)-\mathbf{u}(y,t)\|\leq\frac{\kappa_{0}}{t^{\frac{1}{r}}}\cdot|x-y|,
\end{equation}
then $\mathbb{A}_{\varphi}[\mathbf{u}]\in C(M\times[0,T],\mathbb{R}^{m})$ also satisfies for $t\in(0,t_{\Theta}]$,
\begin{equation}
\|\mathbb{A}_{\varphi}[\mathbf{u}](x,t)-\mathbb{A}_{\varphi}[\mathbf{u}](y,t)\|\leq\frac{\kappa_{0}}{t^{\frac{1}{r}}}\cdot|x-y|.
\end{equation}
\end{lemma}

\begin{proof}
Fix $(x,t)\in M\times[0,t_{\Theta}]$, let $\Delta:=y-x$. At the beginning stage, we assume $|\Delta|\leq t$. Let $\xi_{1}:[0,t]\rightarrow M$ be an absolute continuous curve with $\xi_{1}(t)=x$ such that
\begin{equation*}
\mathbb{A}_{\varphi}[\mathbf{u}]_{1}(x,t)=\varphi_{1}(\xi_{1}(0))+\int^{t}_{0}L_{1}(\xi_{1}(s),\dot{\xi}_{1}(s),\mathbf{u}(\xi_{1}(s),s))ds.
\end{equation*}
As usual, we define $\xi^{\Delta}_{1}:[0,t]\rightarrow M$ as
\begin{equation}\label{curve 0}
\xi^{\Delta}_{1}(s)=\xi_{1}(s)+\frac{s}{t}\Delta,\,\,\,s\in[0,t].
\end{equation}
From the above construction and the assumption $|\Delta|\leq t$, we have
\begin{enumerate}[(i)]
  \item $\xi^{\Delta}_{1}(0)=\xi_{1}(0)=x,\xi^{\Delta}_{1}(t)=\xi_{1}(t)+\Delta=y$,

  \item $|\xi^{\Delta}_{1}(s)-\xi_{1}(s)|=\frac{s}{t}|\Delta|\leq|\Delta|$,

  \item $|\dot{\xi}^{\Delta}_{1}(s)-\dot{\xi}_{1}(s)|=\frac{|\Delta|}{t}\leq1$.
\end{enumerate}

By (L*) and a direct calculation,
$$
\mathbb{A}_{\varphi}[\mathbf{u}]_1(y,t)-\mathbb{A}_{\varphi}[\mathbf{u}]_1(x,t)\leq I_{1}+I_{2}+I_{3},
$$
where
\begin{equation*}
\begin{split}
I_{1}=&\int^{t}_{0}|l_{1}(\xi^{\Delta}_{1}(s),\dot{\xi}^{\Delta}_{1}(s))-l_{1}(\xi^{\Delta}_{1}(s),\dot{\xi}_{1}(s))|ds,\\
I_{2}=&\int^{t}_{0}|l_{1}(\xi^{\Delta}_{1}(s),\dot{\xi}_{1}(s))-l_{1}(\xi_{1}(s),\dot{\xi}_{1}(s))|ds,\\
I_{3}=&\int^{t}_{0}|P_{1}(\xi^{\Delta}_{1}(s),\mathbf{u}(\xi^{\Delta}_{1}(s),s))-P_{1}(\xi_{1}(s),\mathbf{u}(\xi_{1}(s),s))|ds.
\end{split}
\end{equation*}

First, we observe that
\begin{equation*}
|\int_{0}^{t}l_{1}(\xi_{1}(s),\dot{\xi}_{1}(s))ds|=|\mathbb{A}_{\varphi}[\mathbf{u}]_{1}(x,t)-\varphi_{1}(\xi_{1}(0))+\int^{t}_{0}P_{1}(\xi_{1}(s),\mathbf{u}(\xi_{1}(s),s))ds|,\\
\end{equation*}
this leads to
\begin{equation}\label{eq:5}
|\int_{0}^{t}l_{1}(\xi_{1}(s),\dot{\xi}_{1}(s))ds|\leq 2U(t)+t\cdot P(t)\leq F(t).
\end{equation}

Now using \eqref{curve 0}, assumption (L*) and above inequalities, we estimate
\begin{align}\label{eq:8}
I_{1}\leq&\frac{|\Delta|}{t}\int^{t}_{0}|\partial_{\dot{x}}l_{1}(\xi^{\Delta}_{1}(s),\dot{\xi}_{1}(s)+\theta(s)\frac{\Delta}{t})|ds \nonumber\\
\leq&\frac{|\Delta|}{t}\int^{t}_{0}[A|\dot{\xi}_{1}(s)+\theta(s)\frac{\Delta}{t}|^{r^{\ast}-1}+l(A)]ds \nonumber\\
\leq&\frac{A|\Delta|}{t}\bigg[\int^{t}_{0}|\dot{\xi}_{1}(s)+\theta(s)\frac{\Delta}{t}|^{r^{\ast}}ds\bigg]^{\frac{1}{r}}\cdot\bigg[\int^{t}_{0}ds\bigg]^{\frac{1}{r^{\ast}}}+l(A)|\Delta| \nonumber\\
\leq&\frac{A|\Delta|}{t^{\frac{1}{r}}}\bigg[2^{r^{\ast}}\int^{t}_{0}|\dot{\xi}_{1}(s)|^{r^{\ast}}+|\theta(s)\frac{\Delta}{t}|^{r^{\ast}}ds\bigg]^{\frac{1}{r}}+l(A)|\Delta| \nonumber\\
\leq&\frac{A|\Delta|}{t^{\frac{1}{r}}}\bigg[2^{r^{\ast}}A\int^{t}_{0}[l_{1}(\xi_{1}(s),\dot{\xi}_{1}(s))+1]ds\bigg]^{\frac{1}{r}}+l(A)|\Delta| \nonumber\\
\leq&C_{1}(\frac{F(t)}{t}+1)^{\frac{1}{r}}|\Delta|.
\end{align}
where $C_{1}=2^{\frac{r^{\ast}}{r}}A^{1+\frac{1}{r}}+l(A)$. The first inequality uses mean value theorem; for the second inequality, we note that, by (L*) and the definition of $l(A)$, no matter $|\dot{x}|>A$ or $|\dot{x}|\leq A, l_{1}(x,\dot{x})\leq A|\dot{x}|^{r}+l(A)$; the third one is H\"{o}lder inequality; the fifth inequality also use (L*).

\vspace{0.3cm}
For the second part, we calculate as
\begin{align*}
I_{2}\leq&\int^{t}_{0}|\partial_{x}l_{1}(\xi_{1}(s)+\theta(s)\frac{s}{t}\Delta,\dot{\xi}_{1}(s))|\cdot \frac{s}{t}|\Delta|ds\\
\leq&\bigg[\int^{t}_{0}|\partial_{x}l_{1}(\xi_{1}(s)+\theta(s)\frac{s}{t}\Delta,\dot{\xi}_{1}(s))|ds\bigg]\cdot|\Delta|\\
\leq&\bigg[\int^{t}_{0}A(|\dot{\xi}_{1}(s)|^{r^{\ast}}+1)ds\bigg]\cdot|\Delta|\\
\leq&A^{2}|\Delta|\int^{t}_{0}l_{1}(\xi_{1}(s),\dot{\xi}_{1}(s))ds+At|\Delta|\leq C_{2}F(t)|\Delta|,
\end{align*}
where $C_{2}=A^{2}$. The first inequality uses mean value theorem; the second one uses $s\leq t$; the third and fourth one use (L*); the last inequality uses \eqref{eq:5}.

\vspace{0.3cm}
Now we get into the estimate of the last part\,:
\begin{align*}
I_{3}&\leq\Theta\int^{t}_{0}\|\mathbf{u}(\xi^{\Delta}_{1}(s),s)-\mathbf{u}(\xi_{1}(s),s)\|ds+P(t)\int^{t}_{0}|\xi^{\Delta}_{1}(s)-\xi_{1}(s)|ds.\\
\end{align*}

It is directly from Equation \eqref{curve 0} that
\begin{equation}
\int^{t}_{0}|\xi^{\Delta}_{1}(s)-\xi_{1}(s)|ds=\frac{|\Delta|}{t}\int^{t}_{0}sds=\frac{t}{2}|\Delta|.
\end{equation}

By the assumptions on $\mathbf{u}$, if $t\leq t_{\Theta}$, we have:
\begin{align}\label{eq:9}
&\int^{t}_{0}\|\mathbf{u}(\xi^{\Delta}_{1}(s),s)-\mathbf{u}(\xi_{1}(s),s)\|ds \nonumber\\
\leq&\int_{0}^{t}\kappa_{\varphi}(s)|\xi^{\Delta}_{1}(s)-\xi_{1}(s)|ds=\frac{\kappa_{0}|\Delta|}{t}\int_{0}^{t}s^{\frac{1}{r^{\ast}}}ds\leq\frac{\kappa_{0}}{1+\frac{1}{r^{\ast}}}t^{\frac{1}{r^{\ast}}}\cdot|\Delta|.
\end{align}

Combining \eqref{eq:8}-\eqref{eq:9}, for $t\leq t_{\Theta}$ and $x,y$ on $M$ with $|x-y|\leq t$, we have
\begin{align*}
&\mathbb{A}_{\varphi}[\mathbf{u}]_1(y,t)-\mathbb{A}_{\varphi}[\mathbf{u}]_1(x,t)\\
\leq&[C_{1}(\frac{F(1)}{t}+1)^{\frac{1}{r}}+C_{2}F(1)+\frac{P(1)}{2}+\frac{\Theta\kappa_{0}}{1+\frac{1}{r^{\ast}}}t^{\frac{1}{r^{\ast}}}]|\Delta|\\
\leq&[\frac{\kappa_{0}}{2t^{\frac{1}{r}}}+\frac{\kappa_{0}}{2}]\cdot|\Delta|\leq\frac{\kappa_{0}}{t^{\frac{1}{r}}}\cdot|\Delta|.
\end{align*}

Exchanging the role of $x$ and $y$, we have
\begin{equation}\label{Lip1}
|\mathbb{A}_{\varphi}[\mathbf{u}]_1(y,t)-\mathbb{A}_{\varphi}[\mathbf{u}]_1(x,t)|\leq\frac{\kappa_{0}}{t^{\frac{1}{r}}}|y-x|.
\end{equation}
We note that the estimation holds for $u_{2},...,u_{m}$ by the same procedure as above.

To complete the proof, we eliminate the restriction that $|\Delta|\leq t$: for a given $t\in(0,t_{\Theta}]$, let $x,y$ be any points on $M$, there exists $k\in\mathbb{N}$ and $x=x_{0},x_{1},...,x_{k}=y$ such that $|x_{j}-x_{j+1}|\leq t,j=1,...,k$ and $\sum_{j=0}^{k-1}|x_{j}-x_{j+1}|=|x-y|$. Then by \eqref{Lip1}, for each $i$,
\begin{align*}
|u_{i}(x,t)-u_{i}(y,t)|\leq&\sum_{j=0}^{k-1}|u_{i}(x_{j},t)-u_{i}(x_{j+1},t)|\\
\leq&\sum_{j=0}^{k-1}\|\mathbf{u}(x_{j},t)-\mathbf{u}(x_{j+1},t)\|\\
\leq&\frac{\kappa_{0}}{t^{\frac{1}{r}}}\sum_{j=0}^{k-1}|x_{j}-x_{j+1}|=\frac{\kappa_{0}}{t^{\frac{1}{r}}}|x-y|.
\end{align*}
\end{proof}

\textit{Proof of Theorem \ref{Thm Lip11}: }
Fix $\kappa_{0},t_{\Theta}$ as above, we construct $\mathcal{L}_{r}\subseteq C(M\times[0,t_{\Theta}])$ as
$$
\mathcal{L}_{r}:=\bigg\{\mathbf{u}\bigg|\|\mathbf{u}(y,t)-\mathbf{u}(x,t)\|\leq\frac{\kappa_{0}}{t^{\frac{1}{r}}}|y-x|\text{ for any }t>0,x,y\in M\bigg\},
$$
then $\mathcal{L}_{r}$ is complete under $\|\cdot\|_{\infty,M\times[0,t_{\Theta}]}$. By Lemma \ref{x-Lip lem}, $\mathbb{A}_{\varphi}:\mathcal{L}_{r}\rightarrow\mathcal{L}_{r}$. Take $\mathbf{u}_{0}\in\mathcal{L}_{r}$, by Lemma \ref{fp}, $\{\mathbb{A}^{n}_{\varphi}\mathbf{u}_{0}\}_{n\geq0}\subseteq\mathcal{L}_{r}$ is a Cauchy sequence in $C(M\times[0,t_{\Theta}])$. The unique limit of $\{\mathbb{A}^{n}_{\varphi}\mathbf{u}_{0}\}_{n\geq0}$ is the solution $T^{-}_{t}\varphi$ to \eqref{HJs} with $t\in[0,t_{\Theta}]$. Thus by the completeness of $\mathcal{L}_{r}, T^{-}_{t}\varphi\in\mathcal{L}_{r}$, i.e., for any $t\in(0,t_{\Theta}]$ and any two points $x,y$ on $M$,
\begin{equation}\label{eq:Lip}
\|T^{-}_{t}\varphi(x)-T^{-}_{t}\varphi(y)\|\leq\frac{\kappa_{0}}{t^{\frac{1}{r}}}|x-y|,
\end{equation}
where the constant $\kappa_{0}$ depends on the initial data $\varphi$. So for any $t\geq t_{\Theta}$, we use Proposition \ref{sg} and Equation \eqref{eq:Lip} to obtain that
$$
\|\mathbf{u}(x,t)-\mathbf{u}(y,t)\|=\|T^{-}_{t_{\Theta}}\mathbf{u}(x,t-t_{\Theta})-T^{-}_{t_{\Theta}}\mathbf{u}(y,t-t_{\Theta})\|\leq\kappa_{\varphi}(t)\cdot|x-y|,
$$
where
$$
\kappa_{\varphi}(t):=\kappa_{0}(F(1+t-t_{\Theta}),P(1+t-t_{\Theta}))\cdot t^{-\frac{1}{r}}_{\Theta}.
$$
It is immediate that $\kappa_{\varphi}(t)$ is a positive continuous function and is monotone increasing. In particular, $\kappa_{\varphi}(t)$ is locally bounded on $[t_{\Theta},\infty)$ and we complete the proof of locally Lipschitz continuity in $x$.

\vspace{0.5cm}
We turn to the proof of $t$-local Lipschitz continuity. For $0<t<\tau$ and each $i$, by \eqref{cali1},
\begin{equation}\label{leq}
u_{i}(x,\tau)-u_{i}(x,t)\leq-\int^{\tau}_{t}P_{i}(x,\mathbf{u}(x,s))ds\leq P(\tau)\cdot|\tau-t|,
\end{equation}
where we use the constant curve $\gamma(s)\equiv x,s\in[t,\tau]$ to connect $(x,t)$ with $(x,\tau)$.

\vspace{0.5cm}
For the other side of the inequality, let $\xi_{1}:[0,\tau]\rightarrow M$ be a minimizer of $u_{1}(x,\tau)$, i.e.,
\begin{equation*}
u_{1}(x,\tau)=\varphi_{1}(\xi_{1}(0))+\int^{\tau}_{0}L_{1}(\xi_{1}(s),\dot{\xi}_{1}(s),\mathbf{u}(\xi_{1}(s),s))ds.
\end{equation*}

By locally Lipschitz continuity in $x$, we have
\begin{align*}
&u_{1}(x,\tau)-u_{1}(\xi_{1}(t),t)\\
=&(u_{1}(x,\tau)-u_{1}(\xi_{1}(t),\tau))+(u_{1}(\xi_{1}(t),\tau)-u_{1}(\xi_{1}(t),t))\\
\leq&\kappa_{\varphi}(\tau)\cdot|x-\xi_{1}(t)|+P(\tau)\cdot|\tau-t|.
\end{align*}

By the above estimate and H\"{o}lder inequality, we have
\begin{align*}
|x-\xi_{1}(t)|^{r^{\ast}}&\leq\bigg[\int_{t}^{\tau}|\dot{\xi}_{1}(s)|ds\bigg]^{r^{\ast}}\leq\int_{t}^{\tau}|\dot{\xi}_{1}(s)|^{r^{\ast}}ds\cdot(\tau-t)^{\frac{r^{\ast}}{r}}\\
\leq&A\int_{t}^{\tau}l_{1}(\xi_{1}(s),\dot{\xi}_{1}(s))ds\cdot(\tau-t)^{\frac{r^{\ast}}{r}}\\
\leq&A\bigg[|u_{1}(x,\tau)-u_{1}(\xi_{1}(t),t)|+\int_{t}^{\tau}|P_{1}(\xi_{1}(s),\mathbf{u}(\xi_{1}(s),s))|ds\bigg]\cdot(\tau-t)^{\frac{r^{\ast}}{r}}\\
\leq&A[\kappa_{\varphi}(\tau)\cdot|x-\xi_{1}(t)|+2P(\tau)\cdot(\tau-t)]\cdot(\tau-t)^{\frac{r^{\ast}}{r}}.\\
\end{align*}
Thus we obtain
\begin{equation*}
|x-\xi_{1}(t)|^{\frac{r^{\ast}}{r}+1}\leq A\kappa_{\varphi}(\tau)\cdot|x-\xi_{1}(t)|\cdot|\tau-t|^{\frac{r^{\ast}}{r}}+2AP(\tau)\cdot|\tau-t|^{\frac{r^{\ast}}{r}},
\end{equation*}
which implies that there exists $c>0$ such that $|x-\xi_{1}(t)|\leq c|\tau-t|$.

\vspace{0.5cm}

By using the above estimate and Lemma \ref{x-Lip lem}, we obtain
\begin{equation}\label{geq}
\begin{split}
&u_{1}(x,\tau)-u_{1}(x,t)\\
=&(u_{1}(x,\tau)-u_{1}(\xi_{1}(t),t))+(u_{1}(\xi_{1}(t),t)-u_{1}(x,t))\\
\geq&\int^{\tau}_{t}L_{1}(\xi_{1}(s),\dot{\xi}_{1}(s),\mathbf{u}(\xi_{1}(s),s))ds-c\kappa_{\varphi}(\tau)\cdot|\tau-t|\\
\geq&-\int^{\tau}_{t}P_{1}(\xi_{1}(s),\mathbf{u}(\xi_{1}(s),s))ds-c\kappa_{\varphi}(\tau)\cdot|\tau-t|\\
\geq&-(P(\tau)+c\kappa_{\varphi}(\tau))\cdot|\tau-t|.
\end{split}
\end{equation}
Combining \eqref{leq} and \eqref{geq}, we complete the proof of $t$-locally Lipschitz continuity for $u_{1}$ and the same estimate also holds for $u_{2},...,u_{m}$.

\vspace{0.5cm}

Finally, we note that
$$
|u_{i}(x,t)-u_{i}(y,\tau)|\leq|u_{i}(x,\tau)-u_{i}(y,\tau)|+|u_{i}(x,t)-u_{i}(x,\tau)|,
$$
which reduces the proof of locally Lipschitz continuity to the proof of both $x$-locally Lipschitz continuous and $t$-locally Lipschitz continuous. \qed

\section{Further results}

Now we verify the results listed in Corollary \ref{cur lip11} one by one.

\vspace{1em}

\textit{Proof of (i): } We note that, for each $1\leq i\leq m$, $u_{i}:M\times[0,\infty)\rightarrow \mathbb{R}$ is a viscosity solution associated to the Hamilton-Jacobi equation
\begin{equation}
\left\{
  \begin{array}{ll}
    \partial_{t}u+H_{i}(x,\partial_{x}u,\mathbf{u}(x,t))=0, & \hbox{$(x,t)\in M\times[0,\infty)$;} \\
    u(0,x)=\varphi_{i}(x), & \hbox{$x\in M$.}
  \end{array}
\right.
\end{equation}

By locally Lipschitz continuity of $\mathbf{u}$ on $M\times(0,\infty)$, $H_{i}(x,p,\mathbf{u}(x,t))$ is locally Lipschitz continuous on $M\times(0,\infty)$; by condition (H1), $H_{i}(x,p,\mathbf{u}(x,t))$ is strictly convex with respect to $p$. Now we use Theorem \cite[Theorem 5.3.8]{CS} to complete the proof.\qed

\vspace{1em}

\textit{Proof of (ii): }
Fix any $i$ and $\tau\in(0,t]$. By locally Lipschitz continuity of $u_{i}$, there exists $\kappa>0$ and $h>0$ small enough such that for any $\tau-h\leq t_{1}<t_{2}\leq\tau+h$,
\begin{equation}\label{eq:7}
|u_{i}(\xi_{i}(t_{2}),t_{2})-u_{1}(\xi_{i}(t_{1}),t_{1})|\leq\kappa(|\xi_{i}(t_{2})-\xi_{i}(t_{1})|+|t_{2}-t_{1}|).
\end{equation}

On the other hand, $\mathbf{u}(x,s)$ belongs to the compact set $\mathcal{K}:=[-U(t),U(t)]^{m}$ for any $(x,s)\in M\times[0,t]$. So for $C>2\kappa$, by (L2), there exists $D>0$ such that for any $(x,s)\in M\times[0,t]$,
\begin{equation*}
L(x,\dot{x},\mathbf{u}(x,s))\geq C|\dot{x}|-D.
\end{equation*}

By Proposition \ref{ca} and the above inequality,
\begin{align*}
&u_{i}(\xi_{i}(t_{2}),t_{2})-u_{i}(\xi_{i}(t_{1}),t_{1})\\
=&\int^{t_{2}}_{t_{1}}L_{i}(\xi_{i}(s),\dot{\xi}_{i}(s),\mathbf{u}(\xi_{i}(s),s))ds\\
\geq&\int^{t_{2}}_{t_{1}}\bigg[C|\dot{\xi}_{i}(s)|-D\bigg]ds\geq C|\xi_{i}(t_{2})-\xi_{i}(t_{1})|-D|t_{2}-t_{1}|.
\end{align*}

Combining the above inequality and \eqref{eq:7}, we obtain that
\begin{equation}
\kappa|\xi_{i}(t_{2})-\xi_{i}(t_{1})|\leq(C-\kappa)|\xi_{i}(t_{2})-\xi_{i}(t_{1})|\leq(\kappa+D)|t_{2}-t_{1}|,
\end{equation}
this implies the locally Lipschitz continuity near $\tau$.\qed

\vspace{1em}

\textit{Proof of (iii): }
Again we fix $i$, we shall only show $\xi_{i}$ has left derivative satisfying \eqref{dual11}, since the proof is completely similar for other cases.

\vspace{1em}

By (ii), for any $0<h<t$, $\dot{\xi}_{i}|_{[t-h,t]}\in L^{\infty}([t-h,t],TM)$, i.e. there exists $R>0$ such that $|\dot{\xi}_{i}(s)|\leq R$ for any $s\in[t-h,t]$. Thus the difference $\frac{\xi_{i}(t)-\xi_{i}(t-h)}{h}$ is bounded and we denote the limit points of this difference by $\Omega$. It is clear that $\Omega$ is a bounded subset of $T_{x}M$.

\vspace{1em}

Since $u_{i}$ is differentiable at $(x,t)$ and $\xi_{i}$ is locally Lipschitz continuous at $t$, we have
\begin{equation}\label{diff}
\begin{split}
&\frac{u_{i}(x,t)-u_{i}(\xi_{i}(t-h),t-h)}{h}\\
=&\partial_{t}u_{i}(x,t)+P\cdot\frac{\xi_{i}(t)-\xi_{i}(t-h)}{h}+\frac{o(h+|\xi_{i}(t)-\xi_{i}(t-h)|)}{h}\\
=&\partial_{t}u_{i}(x,t)+P\cdot\frac{\xi_{i}(t)-\xi_{i}(t-h)}{h}+\frac{o(h)}{h}.
\end{split}
\end{equation}

On the other hand, by the boundedness of $\dot{\xi}_{i}$ near $t$ and the locally Lipschitz continuity of $\mathbf{u}$ at $(x,t)$, there exists $\kappa>0$ independent of $h$ such that
\begin{equation}\label{conv}
\begin{split}
&\frac{u_{i}(x,t)-u_{i}(\xi_{i}(t-h),t-h)}{h}\\
=&\frac{1}{h}\int^{t}_{t-h}L_{i}(\xi_{i}(s),\dot{\xi}_{i}(s),\mathbf{u}(\xi_{i}(s),s))ds\\
\geq&\frac{1}{h}\int^{t}_{t-h}\bigg[L_{i}(x,\dot{\xi}_{i}(s),\mathbf{u}(x,t))-\kappa(|\xi_{i}(t)-\xi_{i}(s)|+|t-s|)\bigg]ds\\
=&\frac{1}{h}\int^{t}_{t-h}L_{i}(x,\dot{\xi}_{i}(s),\mathbf{u}(x,t))ds-\frac{O(h^{2})}{h}\\
\geq&L_{i}(x,\frac{1}{h}\int^{t}_{t-h}\dot{\xi}_{i}(s)ds,\mathbf{u}(x,t))+O{(h)}\\
=&L_{i}(x,\frac{\xi_{i}(t)-\xi_{i}(t-h)}{h},\mathbf{u}(x,t))+O{(h)},
\end{split}
\end{equation}
where in the first inequality, we use that $\frac{\partial L_{i}}{\partial x}$ is bounded on compact sets of $TM\times\mathbb{R}^{m}$ and in the second inequality, we use the convexity of $L_{i}$ in $\dot{\xi}$.

For any $V\in\Omega$, we choose $h_{n}\rightarrow0$ such that $V=\frac{\xi_{i}(t)-\xi_{i}(t-h_{n})}{h_{n}}$. Now take $h=h_{n}$ in \eqref{diff} and \eqref{conv} and let $n$ goes to infinity, we obtain
\begin{equation*}
P\cdot V-L_{1}(x,V,\mathbf{u}(x,t))\geq H_{1}(x,P,\mathbf{u}(x,t)).
\end{equation*}
By (H1),(L1) and the definition of Legendre transformation, $\Omega$ is a singleton and the equation \eqref{dual11} holds.\qed

\vspace{0.5cm}

We end this section by the following remark:
\begin{remark}
From the proofs above, if we obtain the locally Lipschitz continuity of the viscosity solution, then (L1)-(L3) is sufficient for the validity of Corollary \ref{cur lip11}. Namely, the additional assumption (L*) is not necessary for this corollary.
\end{remark}

\textbf{Acknowledgements}
All authors would like thank Professor P. Cannarsa and Professor W. Cheng  for very helpful conversations on this topic. They also warmly thank Professor H. Ishii for his deep insights into partial results in this paper from PDE aspects, especially pointing out the relation between Theorem \ref{Lip1} and Lions' regularizing effect.

\medskip


\begin{thebibliography}{99}\small


\bibitem{Ar} V. I. Arnol'd, {\em Mathematical methods of classical mechanics}, Translated from the 1974 Russian original by K. Vogtmann and A. Weinstein. Corrected reprint of the second (1989) edition. Graduate Texts in Mathematics, 60. Springer-Verlag, New York.



\bibitem{BM}  J. Ball,  V. Mizel, {\em One-dimensional variational problems whose minimizers do not satisfy the Euler-Lagrange equation}. Arch. Rational Mech. Anal. 90 (1985), no. 4, 325-388.



\bibitem{B} G. Barles. {\it An introduction to the theory of viscosity solutions for first-order Hamilton-Jacobi equations and applications}. Hamilton-Jacobi Equations: Approximations, Numerical Analysis and Applications, Lecture Notes in Mathematics 2074, Springer-Verlag Berlin Heidelberg, 2013.



\bibitem{BCT1} A. Bravetti, H. Cruz, D. Tapias. {\it Contact Hamiltonian mechanics}. Annals of Physics. 376 (2017), 17-39.




\bibitem{BGH} G. Buttazzo, M. Giaquinta, S. Hildebrandt, {\em One-dimensional variational problems. An introduction}, Oxford Lecture Series in Mathematics and its Applications, 15. The Clarendon Press, Oxford University Press, New York, 1998.



\bibitem{CLL} F. Camilli, O. Ley, P. Loreti, {\em Homogenization of monotone systems of Hamilton-Jacobi equations}, ESAIM Control Optim. Calc. Var. 16 (2010) 58-76.



\bibitem{CS} P. Cannarsa, C. Sinestrari, {\em Semiconcave functions, Hamilton-Jacobi equations, and optimal control}, Progress in Nonlinear Differential Equations and their Applications, 58, Birkh\"auser Boston, Inc., Boston, MA, (2004).



\bibitem{DSZ} A. Davini, A. Siconolfi, M. Zavidovique, {\em Random Lax-Oleinik semigroups for Hamilton-Jacobi systems}, J. Math. Pures. Appl. 120 (2018), 294-333.



\bibitem{DS} A. Davini, M. Zavidovique. {\em Aubry sets for weakly coupled systems of Hamilton-Jacobi equations}, SIAM J. Math. Anal. 46 (2014), no. 5, 3361-3389.



\bibitem{EL} H. Engler, S. M. Lenhart, {\em Viscosity solutions for weakly coupled systems of Hamilton¨CJacobi equations}, Proc. London Math. Soc. (3) 63(1) (1991) 212-240.



\bibitem{Fa} A. Fathi. {\it Weak KAM Theorem in Lagrangian Dynamics}. Preliminary Version Number 10, 2008.



\bibitem{FGM}  A. Figalli, D. Gomes, D. Marcon. {\em Weak KAM theory for a weakly coupled system of Hamilton-Jacobi equations}. Calc. Var. Partial Differential Equations 55 (2016), no. 4, No. 79, 32 pp.





\bibitem{IK} H. Ishii, S. Koike, {\em Viscosity solutions for monotone systems of second-order elliptic PDEs}, Comm. Partial Differential Equations. 16(6-7) (1991), 1095-1128.



\bibitem{L} P. L. Lions, {\em Regularizing effects for first-order Hamilton-Jacobi equations}, Applicable Anal. 20 (1985), no. 3-4, 283¨C307.



\bibitem{LC} P. Loreti, G. Caffarelli, {\em Variational Solutions of Coupled Hamilton-Jacobi Equations}, Appl Math Optim. 41 (2000), 9-24.



\bibitem{M1} J. Mather, {\em Action minimizing invariant measures for positive definite Lagrangian systems}, Math. Z. 207 (1991), no. 2, 169-207.



\bibitem{M2} J. Mather, {\em Variational construction of connecting orbits}, Ann. Inst. Fourier (Grenoble) 43 (1993), no. 5, 1349-1386.



\bibitem{MSTY} H. Mitake, A. Siconolfi, H.V. Tran, and N. Yamada. {\em A Lagrangian approach to weakly coupled Hamilton-Jacobi systems}. SIAM J. Math. Anal. 48 (2016), no. 2, 821-846.



\bibitem{MT1} H. Mitake and H. V. Tran. {\em Remarks on the large time behavior of viscosity solutions of quasi-monotone weakly coupled systems of Hamilton-Jacobi equations}. Asymptotic Analysis. 77(1-2) (2012), 43-70.



\bibitem{MT2} H. Mitake and H. V. Tran. {\em A dynamical approach to the large-time behavior of solutions to weakly coupled systems of Hamilton-Jacobi equations}. Jour. Math. Pures. Appli. 101(1) (2014), 76-93.



\bibitem{MT3}  H. Mitake and H. V. Tran. {\em Homogenization of weakly coupled systems of Hamilton-Jacobi equations with fast switching rates}. Archive for Rational Mechanics and Analysis. 211(3) (2014), 733-769.



\bibitem{N} Vinh Duc Nguyen. {\em Some results on the large-time behavior of weakly coupled systems of first-order Hamilton-Jacobi equations}. Journal of Evolution Equations, 14(2) (2014), 299-331.


\bibitem{SWY} X. Su, L. Wang, J. Yan, {\em Weak KAM theory for Hamilton-Jacobi equations depending on unknown functions}, Discrete Contin. Dyn. Syst. 36 (2016), no. 11, 6487-6522.



\bibitem{WWY1}K. Wang, L. Wang, J. Yan, {\em Implicit variational principle for contact Hamiltonian systems}, Nonlinearity 30 (2017), no. 2, 492-515.



\bibitem{WWY2} K. Wang, L. Wang and J. Yan, {\it Variational principle for contact Hamiltonian systems and its applications}, Jour. Math. Pures. Appli, published online.








\end{thebibliography}
\end{document}